\theoremstyle{plain}
\newtheorem{theorem}{Theorem}[section]
\theoremstyle{definition}
\theoremstyle{remark}
\DeclareMathOperator*{\argmin}{arg\,min}
\newcommand{\Omegabf}{{\boldsymbol \Omega}}
\newcommand{\Gammabf}{{\boldsymbol \Gamma}}
\newcommand{\Sigmabf}{{\boldsymbol \Sigma}}
\newcommand{\Psibf}{{\boldsymbol \Psi}}
\newcommand{\Deltabf}{{\boldsymbol \Delta}}
\newcommand{\Yibf}{{\mathbf{Y}}_{i}}
\newcommand{\Xibf}{{\mathbf{X}}_{i}}
\newcommand{\tXibf}{\tilde{\mathbf{X}}_{i}}
\newcommand{\Zibf}{{\mathbf{Z}}_{i}}
\newcommand{\Ibf}{{\mathbf{I}}}
\newcommand{\Qbf}{{\mathbf{Q}}}
\newcommand{\Abf}{{\mathbf{A}}}
\newcommand{\Bbf}{{\mathbf{B}}}
\newcommand{\Dbf}{{\mathbf{D}}}
\newcommand{\Gbf}{{\mathbf{G}}}
\newcommand{\Rbf}{{\mathbf{R}}}
\newcommand{\Sbf}{{\mathbf{S}}}
\newcommand{\Xbf}{{\mathbf{X}}}
\newcommand{\Ybf}{{\mathbf{Y}}}
\begin{document}
\begin{frontmatter}
\title{High dimensional convergence rates for sparse precision estimators for matrix-variate data}
%\title{A sample article title with some additional note\thanksref{t1}}
\runtitle{Sparse precision convergence rates for matrix-variate data}
%\thankstext{T1}{A sample additional note to the title.}

\begin{aug}
%%%%%%%%%%%%%%%%%%%%%%%%%%%%%%%%%%%%%%%%%%%%%%%
%% Only one address is permitted per author. %%
%% Only division, organization and e-mail is %%
%% included in the address.                  %%
%% Additional information can be included in %%
%% the Acknowledgments section if necessary. %%
%% ORCID can be inserted by command:         %%
%% \orcid{0000-0000-0000-0000}               %%
%%%%%%%%%%%%%%%%%%%%%%%%%%%%%%%%%%%%%%%%%%%%%%%
\author[A]{\fnms{Hongqiang}~\snm{Sun}\ead[label=e1]{sunh1@ufl.edu}}
\and
\author[B]{\fnms{Kshitij}~\snm{Khare}\ead[label=e2]{kdkhare@ufl.edu}}

%%%%%%%%%%%%%%%%%%%%%%%%%%%%%%%%%%%%%%%%%%%%%%
%% Addresses                                %%
%%%%%%%%%%%%%%%%%%%%%%%%%%%%%%%%%%%%%%%%%%%%%%
\address[A]{Department of Statistics,
University of Florida\printead[presep={,\ }]{e1}}

\address[B]{Department of Statistics,
University of Florida\printead[presep={,\ }]{e2}}
\runauthor{Sun and Khare}
\end{aug}

\begin{abstract}
In several applications—particularly in biostatistics and bioinformatics—the underlying structure of the data allows samples to be organized in a matrix‑variate form. For example, gene expression studies routinely measure thousands of genes across multiple tissues for each individual, producing gene‑by‑tissue matrices whose dependence structures along rows and columns carry critical biological meaning. Similar matrix‑valued formats arise in neuroscience, where multiregion or multitime‑point EEG measurements form time‑by‑location matrices characterized by rich covariance patterns across both dimensions. In such settings, the underlying row and column covariance matrices are fundamental quantities of interest. In this setting, we establish convergence rates for a popular penalized sparse estimator called SMGM in high-dimensional settings where the row and column dimensions are allowed to increase with the sample size. We discuss critical errors in the previous high-dimensional convergence rate analyses for the SMGM estimator in existing literature, and highlight the novel features of our approach. 
\end{abstract}

\begin{keyword}[class=MSC]
\kwd[Primary ]{62F12}
\kwd[; secondary ]{62H05}
\end{keyword}

\begin{keyword}
\kwd{Matrix-variate data}
\kwd{graphical models}
\kwd{high-dimensional convergence rates}
\end{keyword}

\end{frontmatter}

\section{Introduction}\label{intro}

\noindent
Many modern multivariate datasets have an inherent matrix structure, where each variable corresponds to a combination of levels of two factors. Prominent examples arise in biology and neuroscience: in genomics, the GTEx project measures gene expression for thousands of genes across dozens of human tissues, yielding natural gene--by--tissue matrices that motivate separate row (gene) and column (tissue) dependence modeling \citep{gtex2017nature,gtex2015science}; in electrophysiology, multichannel EEG recordings deliver rich time--by--channel matrices in which spatial (across sensors/channels) and temporal covariances play distinct roles, as exemplified by widely used datasets such as the PhysioNet Motor Movement/Imagery EEG collection \citep{physionet_eegmmidb}. Applications also abound in other fields, such as finance and economics. For example, the dataset in \cite[Section 5.1]{leng2012sparse} records annual US exports to 13 geographical regions (Factor~1) for 36 export items (Factor~2), and can naturally be arranged as a $13 \times 36$ matrix. 

In particular, consider a dataset with $n$ independent and identically distributed $p \times q$ matrices $\Ybf_1, \Ybf_2, \cdots, \Ybf_n$, with $\Ybf_1$ following a matrix variate normal distribution with mean $\bf{0}$ (a $p \times q$ matrix with all zero entries), with row covariance matrix $\Sigmabf$ and column covariance matrix $\Psibf$. This is equivalent to assuming that $\operatorname{vec}(\Ybf_1)$ (a vector obtained by stacking the columns of $\Ybf_1$) has a $pq$-variate normal distribution with mean ${\underline 0} \in \mathbb{R}^{pq}$
and covariance matrix $\Psibf \otimes \Sigmabf$ (see \cite{Dawid:1981, Gupta:Nagar:2000}). Hence, the above model essentially specifies a multivariate normal distribution for the vectorized version of the observed matrices, with the covariance matrix constrained to a Kronecker product form. This constraint is motivated/justified by the aforementioned structure among the variables. The estimation of $\Sigmabf$ and $\Psibf$ is of fundamental importance in these models, as it reveals the nature of dependence between the various variables. 

Note that the likelihood for $(\Sigmabf, \Psibf)$ is given by 
\begin{eqnarray} \label{likelihood}
L(\Sigmabf, \Psibf) 
&=& \frac{1}{\sqrt{2 \pi}^n |\Psibf \otimes \Sigmabf|^{n/2}} \exp \left( -\frac{1}{2} \sum_{i=1}^n \operatorname{vec} (\Yibf)^{\top} (\Psibf \otimes \Sigmabf)^{-1} \operatorname{vec}(\Yibf) \right) \nonumber\\
&=& \frac{1}{\sqrt{2 \pi}^n |\Psibf|^{np/2} |\Sigmabf|^{nq/2}} \exp \left( -\frac{1}{2} \sum_{i=1}^n \operatorname{tr}(\Sigmabf^{-1} \Yibf \Psibf^{-1} \Yibf^{\top}) \right). 
\end{eqnarray}

\noindent
It is evident that $L(\Sigmabf, \Psibf) = L(c \Sigmabf, c^{-1} \Psibf)$ for every 
$c > 0$ (\cite{Galecki:1994, Naik:Rao:2001}). In other words, the parameter $(\Sigmabf, \Psibf)$ is identifiable only up to a positive multiplicative constant. Hence, a one-dimensional constraint such as $\Psibf_{qq} = 1$ (\cite{SVV:2008}), or $\Sigmabf_{pp} = 1$ (\cite{Wang:West:2009}), or $\operatorname{tr}(\Psibf) = q$ (\cite{Theobald:Wuttke:2006}) is often imposed 
for identifiability. 

It is easy to see that conditional maximizers of the likelihood function $L$ with respect to $\Sigmabf$ (given $\Psibf$) or with respect to $\Psibf$ (given $\Sigmabf$) are available in closed form. This observation was used to develop an iterative alternating minimization approach, called the flip-flop algorithm for finding the maximum likelihood estimator of $(\Sigmabf, \Psibf)$, see \cite{Dutilleul:1999, Lu:Zimmerman:2005, SVV:2008}. A non-iterative three-step version of this estimator was also considered in \cite{THZ:2013, WJS:2008}. Asymptotic properties of the non-iterative flip-flop estimator were established in \cite{WJS:2008} (in the $p,q$ fixed and $n \rightarrow \infty$ setting) and \cite{ franks2021near, THZ:2013} (in the high-dimensional setting where $p,q$ are allowed to grow with $n$). 

Introducing sparsity in the inverse covariance matrix is a popular and effective device to tackle parameter proliferation in high-dimensional multivariate data settings \cite{Yuan:Lin:2007}. Under Gaussianity, zeros in the inverse covariance matrix correspond to conditional independence relationships between relevant variables \cite{Lauritzen:1996}. In the matrix-variate data setting, several methods for sparse estimation of $\Omegabf = \Sigmabf^{-1}$ and $\Gammabf = \Psibf^{-1}$ have been proposed \cite{leng2012sparse, THZ:2013, Yin:Li:2012,  Zhou:2014}. In \cite{leng2012sparse, Yin:Li:2012}, the authors consider an objective function which combines the negative log-likelihood with a sparsity-inducing penalty (for $\Omegabf$ and $\Gammabf$). In particular, if a lasso penalty is used for off-diagonal entries of $\Omegabf$ and $\Gammabf$, then the objective function is given by 
\begin{equation}\label{gobj}
 g (\Omegabf, \Gammabf) = \frac{1}{n p q} \sum_{i=1}^{n} \operatorname{tr}\left(\Yibf \Gammabf \Yibf^{\top} \Omegabf\right)-\frac{1}{p} \log |\Omegabf|-\frac{1}{q} \log |\Gammabf|+ \lambda_1 \sum_{i \neq j} |\Omega_{ij}| + \lambda_2 \sum_{i \neq j} |\Gamma_{ij}|.  
\end{equation}
Again, conditional minimizers of this objective function with respect to $\Omegabf$ (given $\Gammabf$) or with respect to $\Gammabf$ (given $\Omegabf$) can be obtained through relevant established approaches (such as the graphical lasso) in the multivariate setting. Both \cite{leng2012sparse} and \cite{Yin:Li:2012} leverage this observation to develop an iterative alternating minimization approach for the objective function in (\ref{gobj}). Following \cite{leng2012sparse}, we will refer to the resulting estimator of $(\Omegabf,\Gammabf)$ as the SMGM-lasso estimator, and the corresponding algorithm as the SMGM algorithm. Both papers provide high-dimensional convergence rates for the respective estimators. Convergence rates for a non-iterative three step version of the SMGM-lasso estimator (called KGlasso) were established in \cite{THZ:2013}. The Gemini method in \cite{Zhou:2014} uses a partial correlation based approach to construct a separate penalized objective function each for $\Omegabf$ and $\Gammabf$, and obtain sparse estimators by (independent/separate) iterative optimization of these two functions. High-dimensional consistency is established under appropriate regularity conditions even when the sample consists of only one (matrix-variate) observation. See also \cite{10.3150/17-BEJ980}, where a similar separate optimization based approach is undertaken to provide sparse estimators of the row and column covariance matrices $\Omegabf^{-1}$ and $\Gammabf^{-1}$. 

%A version of the SMGM algorithm with adaptive lasso penalties for entries of $\Omegabf$ and $\Gammabf$ was developed in \cite{Yin:Li:2012}. 

From a statistical efficiency point of view, the SMGM-lasso estimator in \cite{leng2012sparse, Yin:Li:2012} is attractive because it fully leverages the dependence in the data by {\it jointly} minimizing the penalized log-likelihood function with respect to $(\Omegabf, \Gammabf)$. 
The authors in \cite{leng2012sparse} and \cite{Yin:Li:2012} provide asymptotic convergence rates for the SMGM-lasso estimator under mild regularity assumptions (Theorem 1 in \cite{leng2012sparse}, and Theorem 3 in \cite{Yin:Li:2012}). Unfortunately there are critical errors in the proof of both results. The errors in both these arguments are described in detail in Section \ref{Error}. These errors are substantive in nature and cannot be rectified through simple modifications to the respective arguments. As the {\bf key contribution of this paper}, we establish high-dimensional convergence rates for the SMGM-lasso estimator by employing novel strategies, constructions and arguments. In Section \ref{Error}, we compare and contrast our innovative approach with those presented in \cite{leng2012sparse, Yin:Li:2012}. 

In \cite{leng2012sparse}, consistency results are also provided for other sparsity-inducing penalties such as SCAD. However, the errors previously mentioned are only related to the log-likelihood component of the objective function 
$g$ and do not affect the penalty component. Therefore, for simplicity and clarity, we will focus on the lasso penalty setting throughout this paper.

We now describe the {second class of estimators that are analyzed in this paper}. Note that in the context of covariance estimation for $d$-dimensional vector-variate data using $n$ i.i.d. observations (with $d >> n$), consistent estimation of the covariance matrix is not possible unless a low-dimensional structure such as sparsity is imposed. However, for estimating $(\Sigmabf, \Psibf)$ in the current matrix-variate context, it appears that such low-dimensional structures may not always be necessary for achieving consistency. The reason is that we have $nq$ (dependent) observations with covariance matrix $\Sigmabf$ and $np$ dependent observations with covariance matrix $\Psibf$. Hence, as long as $p = o(nq)$ and $q = o(np)$, or equivalently $\max \left( \frac{p}{q}, \frac{q}{p} \right) = o(n)$, one could expect consistent estimation under mild regularity conditions which do not impose any low-dimensional structure. With these ideas in mind, we examine the `heuristic' sample covariance estimators proposed by Srivastava et al. \cite{SVV:2008}. In the high-dimensional context, straightforward adaptations to the analysis of the `sample correlation estimators' in 
\cite{Zhou:2014}, can be used to obtain non-asymptotic high probability bounds for the entry-wise maximum differences between these heuristic estimators and the corresponding true parameter values. Leveraging these results together with standard matrix norm inequalities leads to a spectral norm consistency for the heuristic estimators under the constraint $\max \left( \frac{p^2\log(\max (p,q))}{q}, \frac{q^2\log(\max (p,q))}{p} \right) = o(n)$, which is much more restrictive than expected. As {\bf an additional contribution of the paper}, we consider a high-dimensional setting where $p,q$ are allowed to grow with $n$ but with the much milder restriction $ \frac{\max (p,\log n)}{q}=o(n)$ and $ \frac{\max (q,\log n)}{p}=o(n)$. In this setting, we show that the heuristic estimators are consistent in spectral norm. More importantly, we obtain asymptotic high-dimensional spectral norm convergence rates for both these heuristic estimators (see Theorem \ref{HEmain}). 

The remainder of the paper is organized as follows. High-dimensional Frobenius norm convergence rates for the SMGM estimator are established in 
Section \ref{high:dim:consistency}. A detailed description of the errors/issues in previous consistency proofs for the SMGM estimator is provided in Section \ref{Error}. Finally, high-dimensional spectral norm convergence rates for the heuristic estimator are established in 
Section \ref{convergence:rate:heuristic:estimator}. 

%In particular 
%\begin{equation} \label{condmle1}
%\argmax_{(\Sigmabf, \Psibf): \Psibf = \Psibf^*} L(\Sigmabf, \Psibf) = \left( \frac{1}{nq} \sum_{i=1}^n \Yibf (\Psibf^*)^{-1} \Yibf^{\top}, \Psibf^* \right) 
%\end{equation}

%and 
%\begin{equation} \label{condmle2}
%\argmax_{(\Sigmabf, \Psibf): \Sigmabf = \Sigmabf^*} L(\Sigmabf, \Psibf) = \left( \Sigmabf^*, \frac{1}{np} \sum_{i=1}^n \Yibf^{\top} (\Sigmabf^*)^{-1} \Yibf \right) 
%\end{equation}

\section{High-dimensional convergence rates for the Sparse SMGM Estimator} 
\label{high:dim:consistency}

\noindent
In this section, we will establish high-dimensional convergence rates 
for the penalized sparse estimator (SMGM) proposed by Leng \& Tang \cite{leng2012sparse}. As discussed in the introduction, we use several novel strategies and arguments which help us avoid the pitfalls/errors in the arguments of \cite{leng2012sparse, Yin:Li:2012}. The different arguments are compared and contrasted in Section \ref{Error}. 

We start by specifying the true data generating model. Under this model, for each $n$, the random matrices $\Ybf_{1}, \Ybf_{2}, \cdots, \Ybf_{n}$ are independent and identically distributed with a matrix normal distribution, which has mean $\mathbf{0}$, row covariance matrix $\mathbf{\Sigmabf}_0$ and column covariance matrix $\mathbf{\Psibf}_0$. Let $\Omegabf_{0}=\Sigmabf_{0}^{-1}$ 
and $\Gammabf_{0}=\Psibf_{0}^{-1}$ respectively denote the row and column precision matrix, and $P_0$ denote the probability measure underlying the true data generating model. Note that since $p$ and $q$ depend on $n$, the covariance matrices, precision matrices and $\Yibf$'s all depend on $n$. However, we omit their dependence on $n$ for simplicity of notation.

\begin{comment}
Here is the true model. We consider random matrices $Y_{1}, Y_{2}, \cdots, Y_{n}$ that follows the matrix normal distribution 
$\mathcal{MN}_{n,p}(\mathbf{0}, \mathbf{\Sigmabf}, \mathbf{\Psibf})$  independently and identically. Their probability density function is
\begin{equation*}
    p(\Yibf|0, \Sigmabf,\Psibf) = (2\pi)^{-qp/2}|\Sigmabf_0^{-1}
|^{q/2}|\Psibf_0^{-1}
|^{p/2}
\text{exp}\{-\text{tr}[\Yibf\Psibf_0^{-1}\Yibf^\top \Sigmabf_0^{-1}/2]\},
\end{equation*}
where $\mathrm{tr}$ denotes trace and $\Sigmabf_0 \in \mathbb{R}^{p\times p}$ and $\Psibf_0 \in \mathbb{R}^{q\times q}$
are the row and column variance matrices, which means for each $\Yibf$, the covariance between row $i,j$ is $(\Sigmabf_0)_{ij}$ and the the covariance between column $k,l$ is $(\Psibf_0)_{kl}$. Also, we let $\Omegabf_{0}=\Sigmabf_{0}^{-1} $ and $ \Gammabf_{0}=\Psibf_{0}^{-1}  $ be the precision matrices for row and column vectors, respectively.
\end{comment}

Let $S_1 = \{(i, j) : (\Omega_{0})_{ij} \neq 0\}$ and
$S_2 = \{(i, j) : (\Gamma_{0})_{ij} \neq 0\}$ denote the locations of non-zero entries in $\Omegabf_0$ and $\Gammabf_0$ respectively. Let $s_1 = |S_1|-p$ and $s_2 = |S_2|-q$ be the number of nonzero off-diagonal parameters in $\Omegabf_0$ and $\Gammabf_0$ respectively. Under this setup, we want to study the asymptotic properties of the SMGM estimators $\hat{\Omegabf}$ and $\hat{\Gammabf}$ defined by 
\begin{equation}\label{g}
 (\hat{\Omegabf}, \hat{\Gammabf}) = \argmin_{\Omegabf \succ {\bf 0}, \Gammabf \succ {\bf 0}} g (\Omegabf, \Gammabf). 
 \end{equation}

\noindent
Here, according to the the Loewner order, we say that ${\bf A} \succ {\bf B}$ if $\Abf-\Bbf$ is positive definite.  Similarly, we say that $\Abf \succcurlyeq \Bbf$ if $\Abf-\Bbf$ is positive semi-definite. In order to establish our asymptotic results, we need the following mild
regularity assumptions. Each assumption below is followed by an interpretation/discussion. Assumptions 1,2 and 4 are identical to the relevant assumptions in \cite{leng2012sparse}. Assumption 3 is a slightly stronger version of \cite[Assumption 3]{leng2012sparse} (with $1+p /\left(s_{1}+1\right)$ and $1+q /\left(s_{2}+1\right)$ in \cite{leng2012sparse} replaced by $\sqrt{1+p /\left(s_{1}+1\right)}$ and $\sqrt{1+q /\left(s_{2}+1\right)}$ respectively here). Assumption 5 is an additional mild assumption that is needed to bound a key term in our analysis, see 
Section \ref{T31} for more details. We would like to clarify that this additional assumption by itself is not enough to fix the errors in the proofs of \cite{leng2012sparse, Yin:Li:2012}. The novel constructions and techniques that are used in our arguments below play a critical and indispensable role, see Section \ref{T32} in particular for more details.  

\medskip

\noindent
\textit{Assumption} 1. As $n \rightarrow \infty, \left(p+s_{1}\right) \log p /(n q) {\longrightarrow 0}$ and $\left(q+s_{2}\right) \log q /(n p) {\longrightarrow} 0$.

\smallskip

\noindent
The conditions in this assumption restrict the density of the true precision matrices $\Omegabf_0$ and 
$\Gammabf_0$. Similar assumptions are common in high-dimensional penalized sparse estimation for precision matrices; see, for example, \cite{leng2012sparse,rothman2008sparse}. 

\medskip

\noindent
\textit{Assumption}  2. There exists constant $\tau_{1}>0$ such that for all $n \geqslant 1$,
\begin{eqnarray*}
&& 0<\tau_{1}<\nu_{1}\left(\Sigmabf_{0}\right) \leqslant \nu_{p}\left(\Sigmabf_{0}\right)<1 / \tau_{1}<\infty , \\
&& 0<\tau_{1}<\nu_{1}\left(\Psibf_{0}\right) \leqslant \nu_{q}\left(\Psibf_{0}\right)<1 / \tau_{1}<\infty .
\end{eqnarray*}
Here $\nu_{1}(\Abf) \leqslant \nu_{2}(\Abf) \leqslant \cdots \leqslant \nu_{m}(\Abf)$ denote the 
eigenvalues of an $m$-dimensional symmetric matrix $\Abf$.
\smallskip

\noindent
This assumption essentially states that the 
eigenvalues of the variance matrices $\Sigmabf_{0}$ and $\Psibf_{0}$ should be (uniformly in $n$) 
bounded away from zero and infinity. This is a very standard assumption in high-dimensional 
covariance asymptotics; see, for example, \cite{banerjee2014posterior,banerjee2015bayesian,khare2015convex,leng2012sparse,peng2009partial}. 

\medskip

\noindent
\textit{Assumption}  3. The tuning parameters $\lambda_1$ and $\lambda_2$ satisfy
\begin{eqnarray*}
&&\lambda_{1}=O\left[p^{-1}\sqrt{1+p /\left(s_{1}+1\right)} \sqrt{\log p /(n q)}\right]\\
&&\lambda_{2}=O\left[q^{-1}\sqrt{1+q /\left(s_{2}+1\right)} \sqrt{\log q /(n p)}\right].
\end{eqnarray*}

\medskip

\noindent
\textit{Assumption}  4. As $n \rightarrow \infty$, the tuning parameters $\lambda_1$ and $\lambda_2$ satisfy
\begin{eqnarray*}
\lambda_{1}^{-2} p^{-2}\log p /(n q) \rightarrow 0; \lambda_{2}^{-2} q^{-2} \log q /(n p) \rightarrow 0.
\end{eqnarray*}

\smallskip

\noindent
%assumption in text do not italic
While Assumption 3 sets upper bounds for tuning parameters $\lambda_1$ and $\lambda_2$, 
Assumption 4 sets lower bounds for them. In other words, a delicate balance needs to be struck in 
the amount of penalization for consistent estimation in this challenging high-dimensional setting.
Note that the collection of settings in which Assumptions 3 \& 4 are simultaneously satisfied is by no means vacuous or trivial. For example, both assumptions are satisfied when $s_1=o(p)$, $s_2=o(q)$, $\lambda_1=C_{0}\sqrt{\frac{\log p}{npq(s_1+1)}}$ and $\lambda_2=C_{0}\sqrt{\frac{\log q}{npq(s_2+1)}}$ for some constant $C_{0}$.
Our final mild assumption adds extra restriction on relation between $n,p,q$ and sparsity compared to Leng and Tang's assumptions in \cite{leng2012sparse}.   

\medskip

\iffalse

\noindent
\textit{Assumption}  5. 

\begin{eqnarray*}
\frac{1}{p} \sqrt{\frac{\log p q}{n}} \sqrt{\frac{q \log q}{n p}+\frac{s_{2} \log q}{n p}}=o\left(\lambda_{1}\right),
\sqrt{\frac{\log pq}{np}}\sqrt{(p+s_1)q} \stackrel{n \rightarrow \infty}{\longrightarrow} 0.
\end{eqnarray*}
This mild assumption adds extra restriction on the lower bound for $\lambda_1$ compared to Leng and Tang' assumptions in \cite{leng2012sparse}.\\
(Note A4$ \Rightarrow \frac{1}{p} \sqrt{\frac{\left(p+s_{1}\right) \log p}{n q}}=o\left(\lambda_{1}\right)$, A5 follows from A4 if
$s_{2}=O\left(\max \left(s_{1}, p\right)\right)$ or $s_{2}=O\left(\frac{n}{\log p q} \frac{p}{q} s_{1}\right) \quad$, see Appendix (I))

\fi

\noindent
\textit{Assumption} 5. Let 
\begin{eqnarray*}
r_n &:=&\max \left(1, \frac{s_{2}}{q}, \frac{s_{1} q}{p},  \frac{\left(q+s_{2}\right) \log q}{\lambda_{1}^{2}n p^{3}},  \frac{\left(p+s_{1}\right) \log p}{\lambda_{2}^{2}n p q^{3}}\right) \text{ and}\\
r_n' &:=&\max \left(1, \frac{s_{1}}{p}, \frac{s_{2} p}{q}, \frac{\left(q+s_{2}\right) \log q}{\lambda_{1}^{2}n q p^{3}}, \frac{\left(p+s_{1}\right) \log p}{\lambda_{2}^{2}n q^{3}}\right).
\end{eqnarray*}

\noindent
Then $ \frac{r_n\log p q}{n} \longrightarrow 0$ or $ \frac{r_n'\log p q}{n} \longrightarrow 0$ as $ n \rightarrow \infty$. 

\smallskip

\noindent
When $p \geqslant q$, this assumption would be satisfied, for instance, by selecting the aforementioned  $\lambda_1$ and $\lambda_2$ while ensuring that $s_1=o(p)$, $s_2=o(q)$  and $\max \left( s_{1}+1, \frac{p (s_{2}+1) \log p}{q^{2} \log q}\right)\frac{\log p q}{n} \longrightarrow 0$ hold. On the other hand, when $q \geqslant p$, this assumption would be satisfied, for instance, by selecting  the aforementioned  $\lambda_1$ and $\lambda_2$ while ensuring that $s_1=o(p)$, $s_2=o(q)$  and\\
$\max \left( s_{2}+1, \frac{q (s_{1}+1) \log q}{p^{2} \log p}\right)\frac{\log p q}{n}
 \longrightarrow 0$ hold.

\iffalse

\begin{eqnarray*}
&& \frac{s_{2} p}{\left(p+s_{1}\right) q} \sqrt{\frac{\log p q}{n}} \stackrel{n \rightarrow \infty}{\longrightarrow 0},  \quad\frac{s_{2}}{\left(q+s_{2}\right)} \sqrt{\frac{\log p q}{n}} \stackrel{n \rightarrow \infty}{\longrightarrow} 0
, \quad
 \frac{\sqrt{\log p q} \sqrt{ \log p}}{q^{\frac{3}{2}} n}=o\left(\lambda_{2}\right)\\
&&  \frac{s_{1} q}{\left(q+s_{2}\right) p} \sqrt{\frac{\log p q}{n}} \stackrel{n \rightarrow \infty}{\longrightarrow 0},  \quad\frac{s_{1}}{\left(p+s_{1}\right)} \sqrt{\frac{\log p q}{n}} \stackrel{n \rightarrow \infty}{\longrightarrow} 0
, \quad
 \frac{\sqrt{\log p q} \sqrt{ \log q}}{p^{\frac{3}{2}} n}=o\left(\lambda_{1}\right)
\end{eqnarray*}

\begin{eqnarray*}
\frac{s_1\sqrt{s_{2}} p}{\left(p+s_{1}\right) q} \sqrt{\frac{\log p q}{n}} \stackrel{n \rightarrow \infty}{\longrightarrow 0},  \quad\frac{s_{2}^{\frac{3}{2}} }{\left(q+s_{2}\right)} \sqrt{\frac{\log p q}{n}} \stackrel{n \rightarrow \infty}{\longrightarrow} 0
\end{eqnarray*}

\begin{equation*}
 \frac{\sqrt{\log p q} \sqrt{s_{1} \log p}}{q^{\frac{3}{2}} n}=o\left(\lambda_{2}\right), \text{or}  \quad
 \frac{\sqrt{\log p q} \sqrt{s_{2} \log q}}{p^{\frac{3}{2}} n}=o\left(\lambda_{1}\right)
\end{equation*}

Similarly, ASSUMPTIONS 5$'$ adds extra restrictions on relations between $n,p,q$ and sparsity and the lower bound for $\lambda_2$ compared to Leng and Tang' assumptions in \cite{leng2012sparse}.\\

\fi

\medskip

\noindent
With the required assumptions in hand, we now state our main consistency result. 
\begin{theorem} \label{Fnormcr}
(Frobenius norm convergence rates for SMGM estimator) Under Assumptions 1-5,
there exists a local minimizer $(\hat{\Omegabf}, \hat{\Gammabf})$ of (\ref{g}) such that
$$
\frac{\left\|\hat{\Omegabf}-\Omegabf_0\right\|_{\mathrm{F}}^2}{p}=O_{P_0}\left\{\left(1+\frac{s_1}{p}\right) \log p /(n q)\right\}
$$
and
$$
\frac{\left\|\hat{\Gammabf}-\Gammabf_0\right\|_{\mathrm{F}}^2}{q}=O_{P_0}\left\{\left(1+\frac{s_2}{q}\right) \log q /(n p)\right\} .
$$
\end{theorem}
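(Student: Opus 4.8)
The plan is to adapt the ``the objective is large on the boundary of a small neighborhood'' technique of Rothman et al. / Lam--Fan, but with the bilinear cross term of $g$ treated by an \emph{exact cancellation} rather than a crude bound. Write $\Omegabf=\Omegabf_0+\Deltabf_1$, $\Gammabf=\Gammabf_0+\Deltabf_2$, set $r_{1,n}^2=(1+s_1/p)\log p/(nq)$ and $r_{2,n}^2=(1+s_2/q)\log q/(np)$, and consider the region $\mathcal R_n=\{(\Deltabf_1,\Deltabf_2):\ \|\Deltabf_1\|_{\mathrm F}\le M_1\sqrt p\,r_{1,n},\ \|\Deltabf_2\|_{\mathrm F}\le M_2\sqrt q\,r_{2,n},\ \Omegabf_0+\Deltabf_1\succ{\bf 0},\ \Gammabf_0+\Deltabf_2\succ{\bf 0}\}$ for large constants $M_1,M_2$. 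By Assumption 1, $p\,r_{1,n}^2=(p+s_1)\log p/(nq)\to0$ and $q\,r_{2,n}^2\to0$, so both increments are $o(1)$ in spectral norm on $\mathcal R_n$; hence, by Assumption 2, for $n$ large $\mathcal R_n$ is a compact subset of the interior of the positive-definite cone and $\Omegabf_0+\Deltabf_1,\Gammabf_0+\Deltabf_2$ keep eigenvalues bounded away from $0$ and $\infty$. The goal is to prove that, on an event of $P_0$-probability tending to $1$, $G(\Deltabf_1,\Deltabf_2):=g(\Omegabf_0+\Deltabf_1,\Gammabf_0+\Deltabf_2)-g(\Omegabf_0,\Gammabf_0)$ is strictly positive on $\partial\mathcal R_n$; then the minimizer of $g$ over the compact $\mathcal R_n$ lies in its interior and is therefore a local minimizer of $g$ obeying $\|\hat\Omegabf-\Omegabf_0\|_{\mathrm F}^2/p=O_{P_0}(r_{1,n}^2)$ and $\|\hat\Gammabf-\Gammabf_0\|_{\mathrm F}^2/q=O_{P_0}(r_{2,n}^2)$.

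I would use the exact decomposition $G=L_1+L_2+C+Q_1+Q_2+P_1+P_2$, where $L_1=\tfrac1p\operatorname{tr}\big((\hat\Sigmabf-\Sigmabf_0)\Deltabf_1\big)$ and $L_2=\tfrac1q\operatorname{tr}\big((\hat\Psibf-\Psibf_0)\Deltabf_2\big)$ with $\hat\Sigmabf=\tfrac1{nq}\sum_i\Yibf\Gammabf_0\Yibf^\top$ (mean $\Sigmabf_0$) and $\hat\Psibf=\tfrac1{np}\sum_i\Yibf^\top\Omegabf_0\Yibf$ (mean $\Psibf_0$); $C=\tfrac1{npq}\sum_i\operatorname{tr}(\Yibf\Deltabf_2\Yibf^\top\Deltabf_1)$ is the bilinear coupling term; $Q_1=\tfrac1p\big(\log|\Omegabf_0|-\log|\Omegabf_0+\Deltabf_1|+\operatorname{tr}(\Sigmabf_0\Deltabf_1)\big)\ge0$ and $Q_2$ its analogue for $\Gammabf$; and $P_1,P_2$ the off-diagonal $\ell_1$-penalty increments (the constant likelihood term $\operatorname{tr}(\Yibf\Gammabf_0\Yibf^\top\Omegabf_0)$ cancels). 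The routine ingredients are: (i) sub-exponential concentration of quadratic forms in Gaussians (the same bounds feeding the heuristic-estimator analysis) giving $\|\hat\Sigmabf-\Sigmabf_0\|_{\max}=O(\sqrt{\log p/(nq)})$ and $\|\hat\Psibf-\Psibf_0\|_{\max}=O(\sqrt{\log q/(np)})$ on a high-probability event; (ii) the graphical-lasso ``cone'' bookkeeping, i.e.\ splitting $\|\Deltabf_1\|_1$ into its diagonal plus $S_1$-off-diagonal piece (of size $\le\sqrt{p+s_1}\,\|\Deltabf_1\|_{\mathrm F}$) and its $S_1^{\mathrm c}$-off-diagonal piece, which is absorbed into $P_1\ge\lambda_1\big(\|(\Deltabf_1)_{S_1^{\mathrm c},\mathrm{off}}\|_1-\|(\Deltabf_1)_{S_1,\mathrm{off}}\|_1\big)$ using Assumptions 3--4, and symmetrically for $\Deltabf_2$; and (iii) the second-order lower bounds $Q_1\ge c_1\|\Deltabf_1\|_{\mathrm F}^2/p$, $Q_2\ge c_2\|\Deltabf_2\|_{\mathrm F}^2/q$ with $c_1,c_2$ depending only on $\tau_1$, valid since the increments are spectrally small on $\mathcal R_n$.

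The heart of the matter --- and where the arguments of \cite{leng2012sparse,Yin:Li:2012} break down --- is that $C$ is \emph{not} of smaller order than $Q_1,Q_2$. Its expectation is $\tfrac1{pq}\operatorname{tr}(\Sigmabf_0\Deltabf_1)\operatorname{tr}(\Psibf_0\Deltabf_2)=:ab$ with $a=\tfrac1p\operatorname{tr}(\Sigmabf_0\Deltabf_1)$, $b=\tfrac1q\operatorname{tr}(\Psibf_0\Deltabf_2)$; along ``scaling'' perturbations $\Deltabf_1\propto\Omegabf_0$, $\Deltabf_2\propto\Gammabf_0$ this is of order $r_{1,n}r_{2,n}$, the same size as $Q_1+Q_2$, and along the exact invariance direction $(\Omegabf,\Gammabf)\mapsto((1+t)\Omegabf,(1+t)^{-1}\Gammabf)$ the likelihood part of $G$ is identically zero so $C$ and $Q_1+Q_2$ cancel to leading order. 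Hence $C$ must be retained, not bounded. I would (a) write $C=ab+(C-ab)$ and bound the fluctuation by $|C-ab|\le\tfrac1p\big\|\tfrac1{nq}\sum_i\Yibf\Deltabf_2\Yibf^\top-b\Sigmabf_0\big\|_{\max}\,\|\Deltabf_1\|_1$, using a concentration inequality for this $\Deltabf_2$-indexed quadratic form --- uniform over $\Deltabf_2$ in a Frobenius ball, which after the cone reduction on $\Deltabf_1$ reduces to a maximum over $O(p+s_1)$ coordinates --- and then invoke Assumption 5 to force this contribution to be $o(p\,r_{1,n}^2)$ (the quantities $s_1q/p$, $(q+s_2)\log q/(\lambda_1^2np^3)$ etc.\ appearing there should be exactly the resulting bounds); and (b) handle the mean part $ab$ by \emph{combining} it with $Q_1+Q_2$: writing $\Sigmabf_0^{1/2}\Deltabf_1\Sigmabf_0^{1/2}=a\Ibf+W_1^{\perp}$ and $\Psibf_0^{1/2}\Deltabf_2\Psibf_0^{1/2}=b\Ibf+W_2^{\perp}$ with $\operatorname{tr}W_1^{\perp}=\operatorname{tr}W_2^{\perp}=0$, the leading part of $Q_1+Q_2$ equals $\tfrac12\big(a^2+\|W_1^{\perp}\|_{\mathrm F}^2/p\big)+\tfrac12\big(b^2+\|W_2^{\perp}\|_{\mathrm F}^2/q\big)$, so $Q_1+Q_2+ab\ \gtrsim\ \tfrac12(a+b)^2+\tfrac12\|W_1^{\perp}\|_{\mathrm F}^2/p+\tfrac12\|W_2^{\perp}\|_{\mathrm F}^2/q\ \ge 0$, and the full ``traceless'' curvature in both variables survives.

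The step I expect to be genuinely hard is the residual one: controlling the perturbation along the degenerate direction where $a+b\approx0$ and $W_1^{\perp},W_2^{\perp}\approx0$ --- i.e.\ the scale-invariance direction, along which the whole likelihood part of $G$ vanishes and $G$ reduces essentially to $P_1+P_2\approx t(\lambda_1\|\Omegabf_0\|_{1,\mathrm{off}}-\lambda_2\|\Gammabf_0\|_{1,\mathrm{off}})$. Since this sign need not be favorable, the naive boundary argument centered at $(\Omegabf_0,\Gammabf_0)$ really does fail, and the fix must come from a careful use of the penalty increments (with the lower-bound Assumption 4 and the extra Assumption 5) or from an appropriately recentered/reparametrized neighborhood that quotients out the scale ambiguity; this is where I anticipate needing the non-standard constructions the authors flag in Section \ref{T32}. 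Once strict positivity of $G$ on $\partial\mathcal R_n$ is secured, compactness and continuity finish the proof, and the two stated rates are read off as $\rho_1^2/p=M_1^2 r_{1,n}^2$ and $\rho_2^2/q=M_2^2 r_{2,n}^2$.
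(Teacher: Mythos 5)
Your skeleton (show $g(\Omegabf_0+\Deltabf_1,\Gammabf_0+\Deltabf_2)-g(\Omegabf_0,\Gammabf_0)>0$ on the boundary of a shrinking neighborhood; split into linear, bilinear, curvature and penalty terms; use entrywise concentration plus cone bookkeeping) is the same as the paper's, and your diagnosis of why \cite{leng2012sparse,Yin:Li:2012} fail is accurate. But the central step of your plan --- strict positivity on $\partial\mathcal R_n$ for a \emph{plain Frobenius ball centered at} $(\Omegabf_0,\Gammabf_0)$ --- is false, not merely ``genuinely hard.'' The scale curve $c\mapsto(c\,\Omegabf_0,c^{-1}\Gammabf_0)$ exits your compact $\mathcal R_n$ through its boundary at $|c-1|\asymp r_{1,n}$ or $r_{2,n}$, the likelihood part of the difference is identically zero along it, and what remains is $\lambda_1(c-1)\|\Omegabf_0\|_{1,\mathrm{off}}+\lambda_2(c^{-1}-1)\|\Gammabf_0\|_{1,\mathrm{off}}$. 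Take, e.g., $\Omegabf_0$ diagonal and $\Gammabf_0$ with nonzero off-diagonals: at the exit point with $c>1$ this is strictly negative, deterministically; and if both off-diagonal $\ell_1$ norms vanish it is zero, so strict positivity on the full boundary still fails. Your closing paragraph concedes exactly this and defers the fix to ``a recentered/reparametrized neighborhood,'' but that deferred step is the substance of the proof, not a residual detail. The paper's resolution is precisely the choice of search set: it constrains $\tilde{\Deltabf}_1=\Omegabf_0^{-1/2}\Deltabf_1\Omegabf_0^{-1/2}$ and $\tilde{\Deltabf}_2$ with \emph{separate} radii for the diagonal part (scale $\beta_1$) and off-diagonal part (scale $\alpha_1$). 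This buys two things your single-radius ball cannot: first, $\operatorname{tr}(\Sigmabf_0\Deltabf_1)=\operatorname{tr}(\tilde{\Deltabf}_1)$ is controlled by the diagonal-only rate, giving $|T_{32}|\leqslant C^2\sqrt{\log p/(nq)}\sqrt{\log q/(np)}\leqslant\tfrac{(1+\kappa)^2}{2}(T_4+T_5)$ as in (\ref{absT32}) with half the curvature left over (so no exact cancellation with the curvature is needed at all --- your completing-the-square identity $Q_1+Q_2+ab\gtrsim\tfrac12(a+b)^2+\cdots$ is a nice observation, but it is exactly what manufactures the degenerate $a+b\approx 0$ direction you cannot handle); second, the curvature lower bounds (\ref{Term4lower})--(\ref{Term5lower}) come out free of $\tau_1$ factors, which is what removes the Leng--Tang comparison error you correctly identified.

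A second, smaller gap: your bound $|C-ab|\leqslant\tfrac1p\bigl\|\tfrac1{nq}\sum_i\Yibf\Deltabf_2\Yibf^{\top}-b\,\Sigmabf_0\bigr\|_{\max}\|\Deltabf_1\|_1$ requires a concentration bound \emph{uniform over $\Deltabf_2$} in a $q^2$-dimensional Frobenius ball; you assert this is ``routine'' but supply no chaining/covering argument, and it is not routine --- indeed the absence of exactly such a uniform bound is the fatal flaw in \cite{leng2012sparse} (see Section \ref{T31}). The paper sidesteps uniformity entirely by writing $T_{31}=\tfrac1{pq}\operatorname{tr}\bigl((\Deltabf_1\otimes\Deltabf_2)(\Sbf-\Sigmabf_0\otimes\Psibf_0)\bigr)$ with the single fixed $pq\times pq$ sample covariance $\Sbf$, so that on one event $|T_{31}|\leqslant\tfrac{K}{pq}\sqrt{\log(pq)/n}\,\|\Deltabf_1\|_1\|\Deltabf_2\|_1$ holds for all $(\Deltabf_1,\Deltabf_2)$ simultaneously, and then splits the $\ell_1$ factors into diagonal, $S_1/S_2$, and $S_1^c/S_2^c$ pieces, absorbing the latter into $T_6$ via Assumptions 3--5 to get (\ref{T31bound}). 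To turn your proposal into a proof you would need to (i) replace the plain ball by the paper's two-radius transformed sets (or otherwise genuinely quotient out the scale direction, which you do not do), and (ii) replace the $\Deltabf_2$-indexed concentration step by the Kronecker-product device or an honest uniform bound. As written, the proposal does not establish the theorem.
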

\begin{proof}
We start by establishing required notation for the proof. 
\begin{eqnarray*}
&&\Deltabf_1=\Omegabf-\Omegabf_0,\Deltabf_2=\Gammabf-\Gammabf_0\\
&&
\tilde{\Deltabf}_{1}=\Omegabf_{0}^{-1 / 2} \Deltabf_{1} \Omegabf_{0}^{-1 / 2}, \tilde{\Deltabf}_{2}=\Gammabf_{0}^{-1 / 2} \Deltabf_{2} \Gammabf_{0}^{-1 / 2}\\
&&\zeta_1=\max(s_1,p),\zeta_2=\max(s_2,q)\\
&& \alpha_{1}=\left\{\zeta_{1} \log p /(n q)\right\}^{1 / 2}, \beta_{1}=\{p \log p /(n q)\}^{1 / 2} \\
&& \alpha_{2}=\left\{\zeta_{2} \log q /(n p)\right\}^{1 / 2}, \beta_{2}=\{q \log q /(n p)\}^{1 / 2}
\end{eqnarray*}
\noindent
For any positive integer $m$, let $\mathcal{D}_{m}$ denote the space of $m \times m$ diagonal matrices with positive diagonal entries, and let $\mathcal{R}_m$ denote the space of $m \times m$ matrices with all diagonal entries equal to zero. For a given positive constant $C$ (the choice of $C$ will be determined later in the proof), define the spaces $\mathcal{A}_p$ and $\mathcal{B}_p$ as 
{\small
\begin{eqnarray*}
\mathcal{A}_p &:=& \left\{\mathbf{M}: \; \mathbf{M}=\alpha_{1} \Rbf_{p}+\beta_{1} \Dbf_{p}, \; 
\Dbf_p \in \mathcal{D}_p, \; \Rbf_p \in \mathcal{R}_p \mbox{ and } \|\Dbf_p\|_F = \|\Rbf_p\|_F = C \right\}, \\
\mathcal{B}_q &:=& \left\{\mathbf{M}: \; \mathbf{M}=\alpha_{2} \Rbf_{q}+\beta_{2} \Dbf_{q}, \; 
\Dbf_q \in \mathcal{D}_q, \; \Rbf_q \in \mathcal{R}_q \mbox{ and } \|\Dbf_q\|_F = \|\Rbf_q\|_F = C \right\}. 
\end{eqnarray*}}

\noindent
We also establish some error bounds between relevant `sample' covariance matrices and their `population' counterparts. These bounds will be useful for the subsequent analysis. Let $\Xibf=\Yibf \Gammabf_{0}^{1 / 2}$ for $1 \leqslant i \leqslant n$. It follows that ${\bf X}_1, {\bf X}_2, \cdots, {\bf X}_n$ are i.i.d. from a matrix-variate normal distribution with mean matrix ${\bf 0}$, row covariance matrix $\Sigmabf_0$, and column covariance matrix ${\bf I}_q$. Hence, the matrix 
$$
\Qbf_{1}=(n q)^{-1} \sum_{i=1}^{n} \Xibf \Xibf^{\top} 
$$

\noindent
can be viewed as a sample covariance matrix obtained from $nq$ i.i.d. observations from $p$-variate normal distribution with mean vector $\underline{0}$ and covariance matrix $\Sigmabf_0$. Using \cite[Lemma A.3]{bickel2008regularized}, there exists a constant $K_{1}$ such that
\begin{equation} \label{Q1}
P_{0}\left\{\max _{1 \leqslant i, j \leqslant p}\left|\left(Q_{1}\right)_{i j}-\left(\Sigma_{0}\right)_{i j}\right| \leqslant K_{1} \sqrt{\frac{\log p}{n q}}\right\}\rightarrow 1. 
\end{equation}
Let $\Zibf=\Omegabf_{0}^{1 / 2}\Yibf$ for $1 \leqslant i \leqslant n$. It follows that ${\bf Z}_1, {\bf Z}_2, \cdots, {\bf Z}_n$ are i.i.d. from a matrix-variate normal distribution with mean matrix ${\bf 0}$, row covariance matrix ${\bf I}_q$, and column covariance matrix $\Psibf_0$. Hence, the matrix 
$$
\Qbf_{2}=(n p)^{-1} \sum_{i=1}^{n} \Zibf^{\top} \Zibf
$$

\noindent
can be viewed as a sample covariance matrix obtained from $np$ i.i.d. observations from $q$-variate normal distribution with mean vector $\underline{0}$ and covariance matrix $\Psibf_0$. Using \cite[Lemma A.3]{bickel2008regularized}, there exists a constant $K_{2}$ such that
\begin{equation}\label{Q2}
P_{0}\left\{\max _{1 \leqslant i, j \leqslant q}\left|\left(Q_{2}\right)_{i j}-\left(\Psi_{0}\right)_{i j}\right| \leqslant K_{2} \sqrt{\frac{\log q}{n p}}\right\}\rightarrow 1.\\
\end{equation}

\noindent
Finally, let 
$$
{\bf S} = \frac{1}{n} \sum_{i=1}^{n} \operatorname{vec}\left(\Yibf^{\top}\right) \operatorname{vec}\left(\Yibf^{\top}\right)^{\top} 
$$

\noindent
denote the $pq$-dimensional sample covariance matrix based on $n$ i.i.d. observations from a $pq$-variate normal distribution with mean vector $\underline{0}$ and covariance matrix $\Sigmabf_{0} \otimes \Psibf_{0}$. Again, by \cite[Lemma A.3]{bickel2008regularized}, there exists a constant $K_{3}$ such that 
\begin{equation} \label{Sk}
P_{0} \left\{\max _{1 \leqslant r, s \leqslant pq}\left|\left(S\right)_{rs}-\left(\Sigma_{0}\otimes\Psi_{0}\right)_{rs}\right| \leqslant K_{3} \sqrt{\frac{\log pq}{n }}\right\}  \rightarrow 1.
\end{equation}

\noindent
Let $K = \max(K_1, K_2, K_3)$ and define the sequences of events $\{C_{1,n}\}_{n \geqslant 1},\\\{C_{2,n}\}_{n \geqslant 1},
\{C_{3,n}\}_{n \geqslant 1}$ as 
\begin{eqnarray*}
 C_{1,n}&:=& \left\{\max _{1 \leqslant i, j \leqslant p}\left|\left(Q_{1}\right)_{i j}-\left(\Sigma_{0}\right)_{i j}\right| \leqslant K \sqrt{\frac{\log p}{n q}} \right\},\\
  C_{2,n}&:=&\left\{\max _{1 \leqslant i, j \leqslant q}\left|\left(Q_{2}\right)_{i j}-\left(\Psi_{0}\right)_{i j}\right| \leqslant K \sqrt{\frac{\log q}{n p}}\right\},\\
   C_{3,n}&:=&\left\{\max _{1 \leqslant r, s \leqslant pq}\left|\left(S\right)_{rs}-\left(\Sigma_{0}\otimes\Psi_{0}\right)_{rs}\right| \leqslant K \sqrt{\frac{\log pq}{n }}\right\},
\end{eqnarray*}
and let $C_n := C_{1,n}\cap C_{2,n} \cap C_{3,n}$. It follows from (\ref{Q1}), (\ref{Q2}) and (\ref{Sk}) that 
\begin{eqnarray}
\label{PCn}
   P_0 (C_n) \rightarrow 1 \mbox{ as } n \rightarrow \infty. 
\end{eqnarray}

\noindent
Our goal is to show that 
\begin{equation} \label{identity}
P_0 \left( \inf _{\tilde{\Deltabf}_{1} \in \mathcal{A}_p, \tilde{\Deltabf}_{2} \in \mathcal{B}_q}\left\{g\left(\Omegabf_{0}+\Deltabf_{1}, \Gammabf_{0}+\Deltabf_{2}\right)-g\left(\Omegabf_{0}, \Gammabf_{0}\right)\right\}>0  \right) \rightarrow 1 
\end{equation}

\noindent
as $n \rightarrow \infty$. Along with the definitions of $\mathcal{A}_p$ and $\mathcal{B}_q$, this would imply the existence of a local minimizer of $g$, say $(\hat{\Omegabf}, \hat{\Gammabf})$ that satisfies 
$$
\frac{\left\|\Omegabf_0^{-1/2} (\hat{\Omegabf}-\Omegabf_0)  \Omegabf_0^{-1/2} \right\|_{\mathrm{F}}^2}{p} = O_{P_0}\left\{\left(1+\frac{s_1}{p}\right) \log p /(n q)\right\}
$$
and
$$
\frac{\left\|\Gammabf_0^{-1/2} (\hat{\Gammabf}-\Gammabf_0) \Gammabf_0^{-1/2} \right\|_{\mathrm{F}}^2}{q} = O_{P_0}\left\{\left(1+\frac{s_2}{q}\right) \log q /(n p)\right\} .
$$

\noindent
Since $\|{\Abf^{1/2}} \Deltabf {\Abf^{1/2}}\|_F \leqslant \nu_{m}\left(\Abf\right) \|\Deltabf\|_F$ 
for any $m$-dimensional symmetric positive definite matrix ${\bf A}$ and any symmetric matrix $\Deltabf$, it follows by Assumption 2 that 
\begin{eqnarray}
\notag
\frac{\left\|\hat{\Omegabf}-\Omegabf_0\right\|_{\mathrm{F}}^2}{p}&\leqslant& \frac{1}{p\tau_{1}}\left\|\Omegabf_0^{-1/2} (\hat{\Omegabf}-\Omegabf_0)  \Omegabf_0^{-1/2} \right\|_{\mathrm{F}}^2 =O_{P_0}\left\{\left(1+\frac{\zeta_1}{p}\right) \log p /(n q)\right\}\\
\label{coveq1}
&=&O_{P_0}\left\{\left(1+\frac{s_1}{p}\right) \log p /(n q)\right\}
\end{eqnarray}
and
\begin{eqnarray}
\notag
\frac{\left\|\hat{\Gammabf}-\Gammabf_0\right\|_{\mathrm{F}}^2}{q}&\leqslant& \frac{1}{q\tau_{1}} \left\|\Gammabf_0^{-1/2} (\hat{\Gammabf}-\Gammabf_0) \Gammabf_0^{-1/2} \right\|_{\mathrm{F}}^2 = O_{P_0}\left\{\left(1+\frac{\zeta_2}{q}\right) \log q /(n p)\right\}\\
\label{coveq2}
&=& O_{P_0}\left\{\left(1+\frac{s_2}{q}\right) \log q /(n p)\right\}, 
\end{eqnarray}

\noindent
as required. Note equalities (\ref{coveq1}) and (\ref{coveq2}) follow from the fact that $\zeta_1=\max(s_1,p)$ and $\zeta_2=\max(s_2,q)$.  Hence, the focus in the subsequent analysis is to establish 
(\ref{identity}).

We first decompose $g(\Omegabf, \Gammabf)-g\left(\Omegabf_{0}, \Gammabf_{0}\right)$, and take a \textbf{different} path than in \cite{leng2012sparse} to bound the decomposed terms. Note by Taylor's expansion that 
$$
\begin{aligned}
\log |\Omegabf|-\log \left|\Omegabf_{0}\right|= & \operatorname{tr}\left(\Sigmabf_{0} \Deltabf_{1}\right)-\operatorname{vec}\left(\Deltabf_{1}\right)^{\top}  \\
& \times\left\{\int_{0}^{1} \left(\Omegabf_{w}^{-1}\otimes \Omegabf_{w}^{-1}\right)(1-w) d w\right\} \operatorname{vec}\left(\Deltabf_{1}\right)
\end{aligned}
$$
where $\Omegabf_{w}=\Omegabf_{0}+w \Deltabf_{1}$ and $
 \Omegabf_{w}^{-1} \otimes \Omegabf_{w}^{-1}=\left(\Omegabf_{0}+w \Deltabf_{1}\right)^{-1} \otimes\left(\Omegabf_{0}+w \Deltabf_{1}\right)^{-1} $. A similar expansion can be obtained for 
 $\log |\Gammabf| - \log |\Gammabf_0|$ with $\Gammabf_{w}=\Gammabf_{0}+w \Deltabf_{2}$ and $
 \Gammabf_{w}^{-1} \otimes \Gammabf_{w}^{-1}=\left(\Gammabf_{0}+w \Deltabf_{2}\right)^{-1} \otimes\left(\Gammabf_{0}+w \Deltabf_{2}\right)^{-1} $. Using these expansions along with the definition of $g$ in (\ref{gobj}), we obtain the decomposition 
$$
g(\Omegabf, \Gammabf)-g\left(\Omegabf_{0}, \Gammabf_{0}\right)=T_{1}+T_{2}+T_{3}+T_{4}+T_{5}+T_{6}+T_{7},
$$

\noindent
where

\begin{eqnarray*}
T_{1}&= & \frac{1}{n p q}\left\{\sum_{i=1}^{n} \operatorname{tr}\left(\mathbf{Y}_{i} \Gammabf_{0} \mathbf{Y}_{i}^{\mathrm{T}} \Deltabf_{1}\right)\right\}-\frac{\operatorname{tr}\left(\Sigmabf_{0} \Deltabf_{1}\right)}{p}, \\
T_{2}&= & \frac{1}{n p q}\left\{\sum_{i=1}^{n} \operatorname{tr}\left(\mathbf{Y}_{i}^{\mathrm{T}} \Omegabf_{0} \mathbf{Y}_{i} \Deltabf_{2}\right)\right\}-\frac{\operatorname{tr}\left(\Psibf_{0} \Deltabf_{2}\right)}{q}, \\
T_{3}&= & \frac{1}{n p q} \sum_{i=1}^{n} \operatorname{tr}\left(\mathbf{Y}_{i} \Deltabf_{2} \mathbf{Y}_{i}^{\mathrm{T}} \Deltabf_{1}\right), \\
T_{4}&= & p^{-1} \operatorname{vec}^{\mathrm{T}}\left(\Deltabf_{1}\right)\left\{\int_{0}^{1} \left(\Omegabf_{w}^{-1}\otimes \Omegabf_{w}^{-1}\right)(1-w) d w\right\} \operatorname{vec}\left(\Deltabf_{1}\right), \\
T_{5}&= & q^{-1} \operatorname{vec}^{\mathrm{T}}\left(\Deltabf_{2}\right)\left\{\int_{0}^{1} \left(\Gammabf_{w}^{-1}\otimes \Gammabf_{w}^{-1}\right)(1-w) d w\right\} \operatorname{vec}\left(\Deltabf_{2}\right), \\
T_{6}&=&\sum_{(i, j) \in S_{1}^{c}} \left\{\lambda_{1}\left|\Omega_{ij}\right|-\lambda_{1}\left|\left(\Omega_{0}\right)_{i j}\right|\right\}+\sum_{(i, j) \in S_{2}^{c}} \left\{\lambda_{2}\left|\Gamma_{ij}\right|-\lambda_{2}\left|\left(\Gamma_{0}\right)_{i j}\right|\right\}\\
&=&\sum_{(i, j) \in S_{1}^{c}} \lambda_{1}\left|\left(\Delta_{1}\right)_{i j}\right| +\sum_{(i, j) \in S_{2}^{c}} \lambda_{2}\left|\left(\Delta_{2}\right)_{i j}\right| 
, \text { and } \\
T_{7}&=&\sum_{(i, j) \in S_{1}, i \neq j} \left\{\lambda_{1}\left|\Omega_{ij}\right|-\lambda_{1}\left|\left(\Omega_{0}\right)_{i j}\right|\right\}+\sum_{(i, j) \in S_{2},i \neq j} \left\{\lambda_{2}\left|\Gamma_{ij}\right|-\lambda_{2}\left|\left(\Gamma_{0}\right)_{i j}\right|\right\}. 
\end{eqnarray*}

\noindent
The main steps of the remaining proof are as follows. We first find positive lower bounds for both $T_{4}$ and $ T_{5}$. Note also that $T_{6}$ is non-negative. Next, we carefully derive upper bounds for the remaining terms, $T_1, T_2, T_3, T_7$, and show that the infimum of $T_{4} + T_{5} + T_{6} - |T_{1}| - |T_2| - |T_{3}| - |T_{7}|$ on the set $\tilde{\Deltabf}_{1} \in \mathcal{A}_p, \tilde{\Deltabf}_{2} \in \mathcal{B}_q$ is strictly positive with high probability.

%The main steps of the remaining proof are as follows. We first find positive lower bounds for both $T_{4}$ and $ T_{5}$. Note also that $T_{6}$ is non-negative. Next, we prove that given $\tilde{\Deltabf}_{1} \in \mathcal{A}_p, \tilde{\Deltabf}_{2} \in \mathcal{B}_q$, $T_{4} + T_{5} + T_{6}$ is a strict upper bound for the sum of absolute values of the remaining terms, i.e., $ |T_{1}| + |T_2| + |T_{3}| + |T_{7}|$. Thus, as $n \rightarrow \infty$, we have $\inf_{\tilde{\Deltabf}_{1} \in \mathcal{A}_p, \tilde{\Deltabf}_{2} \in \mathcal{B}_q}\left\{g(\Omegabf, \Gammabf)-g\left(\Omegabf_{0}, \Gammabf_{0}\right)\right\}>0$, as shown in (\ref{gdiffpositive}). This indicates (\ref{identity}) is satisfied as required. 

To find a lower bound for $T_4$, we need to find a lower bound (as a positive definite matrix in the Loewner order) for the term  $\Omegabf_{w}^{-1}\otimes \Omegabf_{w}^{-1}$ in the integrand. Note that 
{\small
\begin{eqnarray}
\Omegabf_{w}^{-1} \otimes \Omegabf_{w}^{-1}
&=&\left(\Omegabf_{0}+w \Deltabf_{1}\right)^{-1} \otimes\left(\Omegabf_{0}+w \Deltabf_{1}\right)^{-1} \nonumber\\
&=&(\Omegabf_{0}^{-1 / 2}\left(\Ibf+w \tilde{\Deltabf}_{1}\right)^{-1} \Omegabf_{0}^{-1 / 2}) \otimes (\Omegabf_{0}^{-1 / 2}\left(\Ibf+w \tilde{\Deltabf}_{1}\right)^{-1} \Omegabf_{0}^{-1 / 2}) \label{T4eq1}
\end{eqnarray}
}
\noindent
and 
{\small
\begin{equation} \label{T4eq2}
\nu_{\text{max}}\left(\Ibf+w \tilde{\Deltabf}_{1}\right)\leqslant \max_{\|\underline{x}\|_2=1} \underline{x}^{\top} \left(\Ibf+w \tilde{\Deltabf}_{1}\right) \underline{x}\leqslant \max_{\|\underline{x}\|_2=1}  (1+\underline{x}^{\top}\left(w \tilde{\Deltabf}_{1}\right) \underline{x})\leqslant 1+\left\|w \tilde{\Deltabf}_{1}\right\|_{F}. 
\end{equation}}
Since by Assumption 1 we have $\left\|\tilde{\Deltabf}_{1}\right\|_{F} \leqslant \sqrt{\alpha_{1}^{2} C^{2}+\beta_{1}^{2} C^{2}} \longrightarrow 0$,  for an arbitrarily fixed positive constant $\kappa$, $\left\|\tilde{\Deltabf}_{1}\right\|_{F} \leqslant \kappa$ and $\left\|\tilde{\Deltabf}_{2}\right\|_{F} \leqslant \kappa$ for large enough $n$. Then, by (\ref{T4eq1}) and (\ref{T4eq2}), for an arbitrary $\kappa > 0$,  $\Omegabf_{w}^{-1} \otimes \Omegabf_{w}^{-1} \succcurlyeq (1+\kappa)^{-2} \Omegabf_{0}^{-1} \otimes \Omegabf_{0}^{-1}$ in the Loewner ordering for large enough $n$. It follows that 
\begin{eqnarray}
 T_{4} &\geqslant& \frac{1}{p} \operatorname{vec}\left(\Deltabf_{1}\right)^{T}\left[\int_{0}^{1}\frac{1}{\left(1+\kappa\right)^{2}}\left(\Omegabf_{0}^{-1} \otimes \Omegabf_{0}^{-1}\right)(1-w) d w\right] \operatorname{vec}\left(\Deltabf_{1}\right) \nonumber\\
 &\geqslant& \frac{1}{p} \operatorname{vec}\left(\Deltabf_{1}\right)^{\top} \frac{1}{2\left(1+\kappa\right)^{2}}\left(\Omegabf_{0}^{-1} \otimes \Omegabf_{0}^{-1}\right) \operatorname{vec}\left(\Deltabf_{1}\right) \nonumber\\
  &=&\frac{1}{p} \operatorname{vec}\left(\Deltabf_{1}\right)^{\top} \frac{1}{2\left(1+\kappa\right)^{2}}\operatorname{vec}\left(\Omegabf_{0}^{-1}\Deltabf_{1}\Omegabf_{0}^{-1}\right) \nonumber\\
    &=&\frac{1}{2p\left(1+\kappa\right)^{2}}\operatorname{tr}\left(\Omegabf_{0}^{-1}\Deltabf_{1}\Omegabf_{0}^{-1}\Deltabf_{1}\right) \nonumber\\
&=&\frac{\left\|\tilde{\Deltabf}_{1}\right\|_{F}^{2}}{2 p\left(1+\kappa\right)^{2}} \nonumber\\
&=&\frac{1}{2 p\left(1+\kappa\right)^{2}}\left(C^{2} \alpha_{1}^{2}+C^{2} \beta_{1}^{2}\right) \label{Term4lower}
\end{eqnarray}

\noindent
for large enough $n$. Similarly, we get a lower bound for $T_{5}$, namely 
\begin{equation} \label{Term5lower}
T_{5} \geqslant \frac{1}{2 q\left(1+\kappa\right)^{2}}\left(C^{2} \alpha_{2}^{2}+C^{2} \beta_{2}^{2}\right). 
\end{equation}

\noindent
for large enough $n$. {\em The choice of the constant $\kappa$ will be made in the final stages of the proof.}

Now that we have provided positive lower bounds for $T_{4}$ and $T_{5}$, we proceed to bound $|T_{1}|\& |T_{2}|$, $|T_{7}|$ and $|T_{3}|$ individually so that their sum is bounded by $T_{4}+T_{5}+T_{6}$. We begin by analyzing $|T_{1}|$ and $|T_{2}|$. Recall that $\Xibf=\Yibf \Gammabf_{0}^{1 / 2}$, then the first term in $T_{1}$ is
$$
(n p q)^{-1} \sum_{i=1}^{n} \operatorname{tr}\left(\Yibf \Gammabf_{0} \Yibf^{\top} \Deltabf_{1}\right)=(n p q)^{-1} \sum_{i=1}^{n} \operatorname{tr}\left(\Xibf \Xibf^{\top} \Deltabf_{1}\right)=p^{-1} \operatorname{tr}\left(\Qbf_{1} \Deltabf_{1}\right).
$$
 Thus,
\begin{eqnarray*}
 T_{1}&=&p^{-1} \operatorname{tr}\left\{\left(\Qbf_{1}-\Sigmabf_{0}\right) \Deltabf_{1}\right\}=p^{-1}\left\{\sum_{(i, j) \in S_{1}}+\sum_{(i, j) \in S_{1}^{c}}\right\}\left(Q_{1}-\Sigma_{0}\right)_{i j}\left(\Delta_{1}\right)_{i j}\\
 &=:&T_{11}+T_{12}. 
\end{eqnarray*}
In other words, $T_{1}$ is decomposed into two components,  $T_{11}$ and $T_{12}$, by separating $\operatorname{tr}\left\{\left(\Qbf_{1}-\Sigmabf_{0}\right) \Deltabf_{1}\right\}$ into sums over entry indices in $S_1$ and $S_1^c$ respectively. Similarly, $T_{2}$ can be decomposed into two components,  $T_{21}$ and $T_{22}$,  by separating $\operatorname{tr}\left\{\left(\Qbf_{2}-\Psibf_{0}\right) \Deltabf_{2}\right\}$ into sums over the entry indices in $S_2$ and $S_2^c$ respectively, 
\begin{eqnarray*}
T_{2}&=&  q^{-1} \operatorname{tr}\left\{\left(\Qbf_{2}-\Psibf_{0}\right) \Deltabf_{2}\right\}=q^{-1}\left\{\sum_{(i, j) \in S_{2}}+\sum_{(i, j) \in S_{2}^{c}}\right\}\left(Q_{2}-\Psi_{0}\right)_{i j}\left(\Delta_{2}\right)_{i j}\\
&=:&T_{21}+T_{22}. 
\end{eqnarray*}
We first consider to bound the terms $|T_{11}|$ and $|T_{21}|$ by appropriate portions of $T_{4}$ and $T_{5}$ respectively. In order to achieve this, we first show that $\left\|\Deltabf_{1}\right\|_{F}$ can be upper bounded by a constant multiple of $\left\|\tilde{\Deltabf}_{1}\right\|_{F}$. In particular, by properties of trace operator and the positive definite matrix $\Omegabf_{0}$, we have

\begin{eqnarray}
\notag
\left\|\Deltabf_{1}\right\|_{F}&=&\sqrt{\operatorname{tr}\left(\Deltabf_{1} \Deltabf_{1}\right)}\\
\notag
&=&\left\{\operatorname{tr}\left(\Omegabf_{0}^{1 / 2} \tilde{\Deltabf}_{1} \Omegabf_{0} \tilde{\Deltabf}_{1} \Omegabf_{0}^{1 / 2}\right)\right\}^{1 / 2} \\
\notag
& \leqslant& \left\{\nu_{p}\left(\Omegabf_{0}\right)\operatorname{tr}\left(\Omegabf_{0}^{1 / 2} \tilde{\Deltabf}_{1} \tilde{\Deltabf}_{1} \Omegabf_{0}^{1 / 2}\right)\right\}^{1 / 2}\\
\notag
&\leqslant&\left\{\nu_{p}^{2}\left(\Omegabf_{0}\right)\operatorname{tr}\left( \tilde{\Deltabf}_{1}\tilde{\Deltabf}_{1} \right)\right\}^{1 / 2} \\
\label{transdelta}
& \leqslant& \frac{1}{\tau_{1}}\left\|\tilde{\Deltabf}_{1}\right\|_{F},
\end{eqnarray}
where inequality (\ref{transdelta}) follows from Assumption 2.
Similarly, $\left\|\Deltabf_{2}\right\|_{F}$ can be upper bounded by a constant multiple of $\left\|\tilde{\Deltabf}_{2}\right\|_{F}$, namely 
\begin{eqnarray}
\label{transdelta2}
   \left\|\Deltabf_{2}\right\|_{F} \leqslant \frac{1}{\tau_{1}}\left\|\tilde{\Deltabf}_{2}\right\|_{F}.
\end{eqnarray}
By equations (\ref{Q1}),(\ref{Term4lower}),(\ref{transdelta}) and the Cauchy–Schwarz  inequality, we get for large enough $n$
\begin{eqnarray}
\notag
 \left|T_{11}\right| &\leqslant& p^{-1}\left(s_{1}+p\right)^{1 / 2}\left\|\Deltabf_{1}\right\|_{F} \max _{i, j}\left|\left(Q_{1}-\Sigma_{0}\right)_{i j}\right| \\
 \notag
& \leqslant& \frac{K}{p} \sqrt{\frac{\left(s_{1}+p\right) \log p}{n q}}\left\|\Deltabf_{1}\right\|_{F}\\
\notag
&\leqslant& \frac{K}{p \tau_{1}} \sqrt{\alpha_{1}^{2}+\beta_{1}^{2}} \sqrt{C^{2} \alpha_{1}^{2}+C^{2} \beta_{1}^{2}} \\
\label{absT11}
& \leqslant& \frac{2 K\left(1+\kappa\right)^{2}}{\tau_{1} C} T_{4}.
\end{eqnarray}
Similarly,  by equations (\ref{Q2}),(\ref{Term5lower}),(\ref{transdelta2}) and the Cauchy–Schwarz  inequality, we get for large enough $n$, $|T_{21}|$ can be upper bounded by a constant multiple of $T_{5}$, namely
\begin{equation}
\label{absT21} \quad\left|T_{21}\right| \leqslant \frac{2 K\left(1+\kappa\right)^{2}}{\tau_{1} C} T_{5}.
\end{equation}
Next we bound $|T_{12}|$ and $|T_{22}|$ by an appropriate portion of $T_{6}$. Note that 
\begin{eqnarray*}
T_{6}=\sum_{(i, j) \in S_{1}^{c}} \lambda_{1}\left|\left(\Delta_{1}\right)_{i j}\right| +\sum_{(i, j) \in S_{2}^{c}} \lambda_{2}\left|\left(\Delta_{2}\right)_{i j}\right|=:T_{61}+T_{62}, 
\end{eqnarray*}
where $T_{61}:=\sum_{(i, j) \in S_{1}^{c}} \lambda_{1}\left|\left(\Delta_{1}\right)_{i j}\right|$ and $T_{62}:=\sum_{(i, j) \in S_{2}^{c}} \lambda_{2}\left|\left(\Delta_{2}\right)_{i j}\right|$. By inequality (\ref{Q1}), we have 

\begin{eqnarray*}
\left|T_{12}\right|&=&\left|\frac{1}{p} \sum_{(i, j) \in S_{1}^{c}}\left(Q_{1}-\Sigma_{0}\right)_{i j}\left(\Delta_{1}\right)_{i j}\right|\\
 &\leqslant& \frac{1}{p} \sum_{(i, j) \in S_{1}^{c}} K\sqrt{\frac{\log p}{n q}} \mid \left(\Delta_{1}\right)_{i j} \mid
\end{eqnarray*}
on $C_n$. Note by Assumption 4, we have $\left(\lambda_{1}-\frac{K}{p} \sqrt{\frac{\log p}{n q}}\right) \geqslant \frac{\lambda_{1}}{2}$ for large enough $n$. Thus,
\begin{equation}
\label{absT12}
    T_{61}-\left|T_{12}\right| \geqslant \sum_{(i, j) \in S_{1}^{c}}\left(\lambda_{1}-\frac{K}{p} \sqrt{\frac{\log p}{n q}}\right)\left|\left(\Delta_{1}\right)_{i j}\right| \geqslant \frac{\lambda_{1}}{2} \sum_{\left(i, j) \in S_{1}^{c}\right.}\left|\left(\Delta_{1}\right)_{i j}\right| = \frac{T_{61}}{2}
\end{equation}
for large enough $n$. We can similarly obtain

\begin{equation}
\label{absT22}
T_{62}-\left|T_{22}\right| \geqslant \frac{T_{62}}{2},
\end{equation}
for large enough $n$.
\noindent
Next, we proceed to bound $|T_{7}|$ with an appropriate portion of $T_{4}+T_{5}$. As with the term $T_{6}$, we divide $T_{7}$ into two parts as follows: 
\begin{eqnarray*}
    T_{7}&=&\sum_{(i, j) \in S_{1}, i \neq j} \left\{\lambda_{1}\left|\Omega_{ij}\right|-\lambda_{1}\left|\left(\Omega_{0}\right)_{i j}\right|\right\}+\sum_{(i, j) \in S_{2},i \neq j} \left\{\lambda_{2}\left|\Gamma_{ij}\right|-\lambda_{2}\left|\left(\Gamma_{0}\right)_{i j}\right|\right\}\\
    &=:&T_{71}+T_{72},
\end{eqnarray*}
where $T_{71}:=\sum_{(i, j) \in S_{1}, i \neq j} \left\{\lambda_{1}\left|\Omega_{ij}\right|-\lambda_{1}\left|\left(\Omega_{0}\right)_{i j}\right|\right\}$\\ and $T_{72}:=\sum_{(i, j) \in S_{2},i \neq j} \left\{\lambda_{2}\left|\Gamma_{ij}\right|-\lambda_{2}\left|\left(\Gamma_{0}\right)_{i j}\right|\right\}$. \\
Now note that for large enough $n$,
\begin{eqnarray}
\notag
\left|T_{71}\right| & \leqslant& \sum_{(i \neq j) \in S_{1}} \lambda_{1}\left|\Omega_{ij}-(\Omega_{0})_{i j}\right|=\lambda_{1} \sum_{(i \neq j) \in S_{1}} \mid(\Delta_{1})_{i j} \mid \\
\label{T71eq1}
& \leqslant& \lambda_{1} \sqrt{s_{1}} \sqrt{\sum_{(i, j) \in S_{1}}\left(\Delta_{1}\right)_{i j}^{2}}=\lambda_{1} \sqrt{s_{1}}\left\|\Deltabf_{1}\right\|_{F} \\
\label{T71eq2}
& \leqslant& \frac{\lambda_{1}}{\tau_{1}} \sqrt{s_{1}} \sqrt{C^{2} \alpha_{1}^{2}+C^{2} \beta_{1}^{2}} \\
\label{T71eq3}
& \leqslant& B \frac{\sqrt{s_{1}}}{\tau_{1}p}\sqrt{1+\frac{p}{s_{1}+1}} \sqrt{\frac{\log p}{n q}} \sqrt{C^{2} \alpha_{1}^{2}+C^{2} \beta_{1}^{2}} \\
\notag
& \leqslant&  \frac{B}{\tau_{1}p}\sqrt{s_{1}+p} \sqrt{\frac{\log p}{n q}} \sqrt{C^{2} \alpha_{1}^{2}+C^{2} \beta_{1}^{2}}\\
\label{absT71}
&\leqslant&\frac{2B(1+\kappa)^{2}}{\tau_{1}C}T_{4},
\end{eqnarray}
where inequality (\ref{T71eq1}) follows from the Cauchy-Schwarz inequality, inequality (\ref{T71eq2}) follows from (\ref{transdelta}), inequality (\ref{T71eq3}) follows from Assumption 3 (since that assumption implies the existence a constant $B$ such that $\lambda_{1} \leqslant \frac{B}{p}\sqrt{1+\frac{p}{s_{1}+1}} \sqrt{\frac{\log p}{n q}}$ for large enough $n$), and the last inequality follows from the lower bound for the term $T_{4}$ in (\ref{Term4lower}).

Similarly, by the Cauchy-Schwarz inequality, inequality (\ref{transdelta2}),  Assumption 3 and the lower bound for the term $T_5$ in (\ref{Term5lower}), we obtain 
\begin{equation}
\label{absT72}
 \left|T_{72}\right| \leqslant \frac{2B(1+\kappa)^{2}}{\tau_{1}C}T_{5}
\end{equation}
for large enough $n$.

Hence, we have bounds for $|T_1|$, $|T_2|$ and $|T_7|$ in terms of relevant portions of the terms $T_4$, $T_5$ and $T_6$. Lastly, we try to bound $T_{3}$ by the remaining portions of these three positive terms. By utilizing the properties of the trace and vectorization operators, along with the mixed-product property of the Kronecker product, we decompose $T_{3}$ into two components for separate bounding: 
\begin{eqnarray}
T_{3}&=&\frac{1}{n p q} \sum_{i=1}^{n} \operatorname{tr}\left(\Yibf \Deltabf_{2} \Yibf^{\top} \Deltabf_{1}\right) \nonumber\\
& =&\frac{1}{n p q} \sum_{i=1}^{n} \operatorname{vec}\left(\Yibf^{\top}\right)^{\top} \operatorname{vec}\left(\Deltabf_{2} \Yibf^{\top} \Deltabf_{1}\right)  \nonumber\\
& =&\frac{1}{n p q} \sum_{i=1}^{n} \operatorname{vec}\left(\Yibf^{\top}\right)^{\top}\left(\Deltabf_{1} \otimes \Deltabf_{2}\right) \operatorname{vec}\left(\Yibf^{\top}\right) \nonumber\\
& =&\frac{1}{n p q} \sum_{i=1}^{n} \operatorname{tr}\left(\operatorname{vec}\left(\Yibf^{\top}\right)^{\top}\left(\Deltabf_{1} \otimes \Deltabf_{2}\right) \operatorname{vec}\left(\Yibf^{\top}\right)\right) \nonumber\\
& =&\frac{1}{n p q} \sum_{i=1}^{n} \operatorname{tr}\left(\left(\Deltabf_{1} \otimes \Deltabf_{2}\right) \operatorname{vec}\left(\Yibf^{\top}\right) \operatorname{vec}\left(\Yibf^{\top}\right)^{\top}\right) \nonumber\\
& =&\frac{1}{p q} \operatorname{tr}\left(\left(\Deltabf_{1} \otimes \Deltabf_{2}\right)\left(\frac{1}{n} \sum_{i=1}^{n} \operatorname{vec}\left(\Yibf^{\top}\right) \operatorname{vec}\left(\Yibf^{\top}\right)^{\top}\right)\right) \nonumber\\
& =&\frac{1}{p q} \operatorname{tr}\left(\left(\Deltabf_{1} \otimes \Deltabf_{2}\right)\left(\Sbf-\Sigmabf_{0} \otimes \Psibf_{0}\right)\right)+\frac{1}{p q} \operatorname{tr}\left(\left(\Deltabf_{1} \otimes \Deltabf_{2}\right)\left(\Sigmabf_{0} \otimes \Psibf_{0}\right)\right) \nonumber\\
& =:&T_{31}+T_{32}, \label{t3:decomposition}
\end{eqnarray}
where {\small$T_{31}:=\frac{1}{p q} \operatorname{tr}\left(\left(\Deltabf_{1} \otimes \Deltabf_{2}\right)\left(\Sbf-\Sigmabf_{0} \otimes \Psibf_{0}\right)\right)$} and {\small$T_{32}:=\frac{1}{p q} \operatorname{tr}\left(\left(\Deltabf_{1} \otimes \Deltabf_{2}\right)\left(\Sigmabf_{0} \otimes \Psibf_{0}\right)\right)$}.
Note that 
\begin{eqnarray}
\notag
|\operatorname{tr}\left(\Deltabf_{1} \Sigmabf_{0}\right)|&=&|\operatorname{tr}\left(\Sigmabf_{0}^{1/2}\Deltabf_{1} \Sigmabf_{0}^{1/2}\right)|=|\operatorname{tr}(\tilde{\Deltabf}_1)| \\
\label{T31tr}
&\leqslant& \sqrt{p}\sqrt{\sum_{r=1}^p (\tilde{\Delta}_1)_{rr}^2}
=\sqrt{p} \beta_{1} C=C p \sqrt{\frac{\log p}{n q}}. 
\end{eqnarray}
Similarly, we have $\left|\operatorname{tr}\left(\Deltabf_{2} \Psibf_{0}\right)\right| \leqslant \sqrt{q} \beta_{2} C=C q \sqrt{\frac{\log q}{n p}}$. Combined with (\ref{Term4lower}), (\ref{Term5lower}) and the mixed-product property of the Kronecker product, for large enough $n$, we can bound $\left|T_{32}\right|$ as follows,
\begin{eqnarray}
\notag
\left|T_{32}\right| &=&\left|\frac{1}{pq} \operatorname{tr}\left(\Deltabf_{1} \Sigmabf_{0}\right) \operatorname{tr}\left(\Deltabf_{2} \Psibf_{0}\right)\right|\\
\notag
&\leqslant& C^{2} \sqrt{\frac{\log p}{n q}} \sqrt{\frac{\log q}{n p}} \leqslant C^{2}\left(\frac{1}{2} \frac{\log p}{n q}+\frac{1}{2} \frac{\log q}{n p}\right)\\
&=&C^{2}\left(\frac{\beta_{1}^{2}}{2 p}+\frac{\beta_{2}^{2}}{2 q}\right) 
\leqslant \frac{C^{2}}{2}\left(\frac{\alpha_{1}^{2}+\beta_{1}^{2}}{2 p}+\frac{\alpha_{2}^{2}+\beta_{2}^{2}}{2 q}\right)\\
\label{absT32}
& \leqslant &\frac{\left(1+\kappa\right)^{2}}{2}\left(T_{4}+T_{5}\right),
\end{eqnarray}
which indicates $|T_{32}|$ can also be upper bounded by some appropriate portion of $T_{4}+T_{5}$.

Before we analyze the last remaining term, $|T_{31}|$, we collect our bounds so far from (\ref{absT11}), (\ref{absT21}), (\ref{absT12}), (\ref{absT22}), (\ref{absT71}), (\ref{absT72}), (\ref{absT32}) to get the following inequality on the event $C_n$ for large enough $n$: 
\begin{eqnarray}
\label{woT31}
|T_{1}|+|T_{2}|+|T_{7}|+|T_{32}|
\leqslant \left(\frac{2 K}{\tau_{1} C}+\frac{2B}{\tau_{1} C}+\frac{1}{2}\right) (1+\kappa)^{2}(T_{4}+ T_{5})+ \frac{1}{2}T_{6}
\end{eqnarray}
Note that the constants $\kappa$ and $C$ are arbitrary. Hence, the coefficient $(\frac{2 K}{\tau_{1} C}+\frac{2B}{\tau_{1} C}+\frac{1}{2})(1+\kappa)^{2}$ for $T_4$ and $T_5$ can be adjusted to approach $1/2$ by selecting large enough $C$ and small enough $\kappa$. 

To analyze $|T_{31}|$, we first rewrite $T_{31}$ in an alternative form. In particular, note that 
{\footnotesize
\begin{eqnarray*}
T_{31}&=&\frac{1}{p q} \operatorname{tr}\left(\left(\Deltabf_{1} \otimes \Deltabf_{2}\right)\left(\Sbf-\Sigmabf_{0} \otimes \Psibf_{0}\right)\right)\\
& =&\frac{1}{p q} \sum_{j_{1},  j_{4}=1}^{p} \sum_{j_{2}, j_{3}=1}^{q}\left(\Delta_{2}\right)_{j_{2} j_{3}}\left(\Delta_{1}\right)_{j_{1} j_{4}}\left(\frac{1}{n} \sum_{i=1}^{n}\left(Y_{i}\right)_{j_{1} j_{2}}\left(Y_{i}\right)_{j_{4} j_{3}}-\left(\Sigma_{0}\right)_{j_{1} j_{4}}\left(\Psi_{0}\right)_{j_{2} j_{3}} \right). 
\end{eqnarray*}}
Here, $E_{P_0}\left[\left(Y_{i}\right)_{j_{1} j_{2}}\left(Y_{i}\right)_{j_{4} j_{3}}\right]=\left(\Sigma_{0}\right)_{j_{1} j_{4}}\left(\Psi_{0}\right)_{j_{2} j_{3}}$. 
By (\ref{Sk}), on $C_n$, we have
{\scriptsize
\begin{eqnarray} \label{T31U}
\left|T_{31}\right| 
&\leqslant& \frac{K}{p q} \sqrt{\frac{\log p q}{n}}\left(\sum_{j_{1}, j_{4}=1}^{p}\left|\left(\Delta_{1}\right)_{j_{1} j_{4}}\right|\right)\left(\sum_{j_{2}, j_{3}=1}^{q}\left|\left(\Delta_{2}\right)_{j_{2} j_{3}}\right|\right) \nonumber\\
&=&A_{n}\left(\sum_{j_{1}=j_{4}}\left|\left(\Delta_{1}\right)_{j_{1}j_{4}}\right|\right)\left(\sum_{j_{2}=j_{3}}\left|\left(\Delta_{2}\right)_{j_{2}j_{3}}\right|\right) \nonumber\\
&+&A_{n}\left(\sum_{j_{1}=j_{4}}\left|\left(\Delta_{1}\right)_{j_{1} j_{4}}\right|\right)\left(\sum_{j_{2} \neq j_{3}}\left|\left(\Delta_{2}\right)_{j_{2} j_{3}}\right|\right)+A_{n}\left(\sum_{j_{1}\neq j_{4}}\left|\left(\Delta_{1}\right)_{j_{1} j_{4}}\right|\right)\left(\sum_{j_{2} = j_{3}}\left|\left(\Delta_{2}\right)_{j_{2} j_{3}}\right|\right)
\nonumber\\
&+&A_{n}\left(\sum_{j_{1}\neq j_{4}}\left|\left(\Delta_{1}\right)_{j_{1} j_{4}}\right|\right)\left(\sum_{j_{2} \neq j_{3}}\left|\left(\Delta_{2}\right)_{j_{2} j_{3}}\right|\right) \nonumber\\
&=:& U_1+U_2+U_3,
\end{eqnarray}
}
where we set $A_{n}:=\frac{K}{p q} \sqrt{\frac{\log p q}{n}}$ for simplicity of notation, and 
{\scriptsize
\begin{eqnarray*}
U_1&:=&A_{n}\left(\sum_{j_{1}=j_{4}}\left|\left(\Delta_{1}\right)_{j_{1}j_{4}}\right|\right)\left(\sum_{j_{2}=j_{3}}\left|\left(\Delta_{2}\right)_{j_{2}j_{3}}\right|\right)\\
U_2&:=&A_{n}\left(\sum_{j_{1}=j_{4}}\left|\left(\Delta_{1}\right)_{j_{1} j_{4}}\right|\right)\left(\sum_{j_{2} \neq j_{3}}\left|\left(\Delta_{2}\right)_{j_{2} j_{3}}\right|\right)+A_{n}\left(\sum_{j_{1}\neq j_{4}}\left|\left(\Delta_{1}\right)_{j_{1} j_{4}}\right|\right)\left(\sum_{j_{2} = j_{3}}\left|\left(\Delta_{2}\right)_{j_{2} j_{3}}\right|\right)\\
U_3&:=&A_{n}\left(\sum_{j_{1}\neq j_{4}}\left|\left(\Delta_{1}\right)_{j_{1} j_{4}}\right|\right)\left(\sum_{j_{2} \neq j_{3}}\left|\left(\Delta_{2}\right)_{j_{2} j_{3}}\right|\right).    
\end{eqnarray*}}
 We deal with these three parts separately. First, note that for large enough $n$
\begin{eqnarray}
\notag
U_1&=&A_{n}\left(\sum_{j_{1}=j_{4}}\left|\left(\Delta_{1}\right)_{j_{1}j_{4}}\right|\right)\left(\sum_{j_{2}=j_{3}}\left|\left(\Delta_{2}\right)_{j_{2}j_{3}}\right|\right)\\
\label{U1eq1}
& \leqslant& \frac{K}{p q} \sqrt{\frac{\log p q}{n}} \cdot \sqrt{p}\left\|\Delta_{1}\right\|_{F} \sqrt{q}\left\|\Delta_{2}\right\|_{F} \\
\label{U1eq2}
&\leqslant&\frac{K}{\tau_{1}^{2} p q} \sqrt{\frac{\log p q}{n}} \sqrt{p} \sqrt{C^{2} \alpha_{1}^{2}+C^{2} \beta_{1}^{2}} \sqrt{q} \sqrt{C^{2} \alpha_{2}^{2}+C^{2} \beta_{2}^{2}} \\
\label{U1eq3}
& \leqslant& \frac{K}{\tau_{1}^{2} p q} \sqrt{\frac{\log p q}{n}}\left[\frac{q}{2}\left(C^{2} \alpha_{1}^{2}+C^{2} \beta_{1}^{2}\right)+\frac{p}{2}\left(C^{2} \alpha_{2}^{2}+C^{2} \beta_{2}^{2}\right)\right] \\
\label{U1eq4}
& \leqslant&\frac{K(1+\kappa)^{2}}{\tau_{1}^{2}} \sqrt{\frac{\log p q}{n}}\left(T_{4}+T_{5}\right)
\end{eqnarray}
where inequality (\ref{U1eq1}) follows from the Cauchy-Schwarz inequality, inequality (\ref{U1eq2}) follows from (\ref{transdelta}) and (\ref{transdelta2}), inequality (\ref{U1eq3}) follows from the AM-GM inequality, and inequality (\ref{U1eq4}) follows from the lower bounds for the term $T_{4}$ and $T_{5}$ in (\ref{Term4lower}) and (\ref{Term5lower}).

Next, we proceed to bound $U_2$. In particular, note that the first term in $U_2$ satisfies 
\begin{eqnarray}
\notag
&&A_{n}\left(\sum_{j_{1}=j_{4}}\left|\left(\Delta_{1}\right)_{j_{1} j_{4}}\right|\right)\left(\sum_{j_{2} \neq j_{3}}\left|\left(\Delta_{2}\right)_{j_{2} j_{3}}\right|\right)\\
\notag
&=&A_{n} \left(\sum_{j_{1}=j_{4}}\left|\left(\Delta_{1}\right)_ {j_{1} j_{4}}\right|\right)\left(\sum_{\left(j_{2} \neq j_{3}\right) \in S_{2}}\left|\left(\Delta_{2}\right)_{j_{2}  j_{3}}\right|+\sum_{\left(j_{2} \neq j_{3}\right) \in S_{2}^c}\left|\left(\Delta_{2}\right)_{j_{2}  j_{3}}\right|\right) \\
\label{U21eq1}
& \leqslant& A_{n} \sqrt{p}\left\|\Deltabf_{1}\right\|_{F} \sqrt{s_{2}}\left\|\Deltabf_{2}\right\|_{F}+A_{n} \sqrt{p}\left\|\Deltabf_{1}\right\|_{F} \sum_{\left(j_{2} \neq j_{3}\right) \in S^{c}}\left|\Gamma_{j_{2} j_{3}}\right|,
\end{eqnarray}
where (\ref{U21eq1}) follows from the Cauchy-Schwarz inequality. The first term in (\ref{U21eq1}) can now be bounded as 
\begin{eqnarray}
\notag
&&A_{n} \sqrt{p}\left\|\Deltabf_{1}\right\|_{F} \sqrt{s_{2}}\left\|\Deltabf_{2}\right\|_{F}\\
\label{U21eq2}
& \leqslant& \frac{K}{\tau_{1}^{2} p q} \sqrt{\frac{\log p q}{n}} \sqrt{p} \sqrt{C^{2} \alpha_{1}^{2}+C^{2} \beta_{1}^{2}} \sqrt{s_{2}} \sqrt{C^{2} \alpha_{2}^{2}+C^{2} \beta_{2}^{2}} \\
\label{U21eq3}
& \leqslant& \frac{K}{\tau_{1}^{2} } \sqrt{\frac{\log p q}{n}} \sqrt{\frac{s_{2}}{q}}\left[\frac{1}{2p}\left(C^{2} \alpha_{1}^{2}+C^{2} \beta_{1}^{2}\right)+\frac{1}{2q}\left(C^{2} \alpha_{2}^{2}+C^{2} \beta_{2}^{2}\right)\right] \\
\label{U21eq4}
& \leqslant& \frac{K(1+\kappa)^{2}}{\tau_{1}^{2}} \sqrt{\frac{\log p q}{n}} \sqrt{\frac{s_{2}}{q}} \left(T_{4}+T_{5}\right) 
\end{eqnarray}
where inequality (\ref{U21eq2}) follows from (\ref{transdelta}) and (\ref{transdelta2}), inequality (\ref{U21eq3}) follows from the AM-GM inequality, and (\ref{U21eq4}) follows from the lower bounds for the term $T_{4}$ and $T_{5}$ in (\ref{Term4lower}) and (\ref{Term5lower}). Subsequently, for the second term in (\ref{U21eq1}), the inequality in (\ref{transdelta}) implies that 
{\small
\begin{equation} \label{U21eq6}
A_{n} \sqrt{p}\left\|\Deltabf_{1}\right\|_{F}\sum_{\left(j_{2} \neq j_{3}\right) \in S_{2}^{c}}\left|\Gamma_{j_{2} j_{3}}\right| 
\leqslant \frac{K C}{\tau_{1} p q} \sqrt{\frac{\log p q}{n}} \sqrt{p} \sqrt{\frac{\left(p+\zeta_{1}\right) \log p}{n q}}\sum_{\left(j_{2} \neq j_{3}\right) \in S_{2}^{c}}\left|\Gamma_{j_{2} j_{3}}\right|. 
\end{equation}}
Following a similar line of reasoning, the second term of $U_2$ can be bounded as 

{\footnotesize
\begin{eqnarray}
\notag
&&A_{n}\left(\sum_{j_{1}\neq j_{4}}\left|\left(\Delta_{1}\right)_{j_{1} j_{4}}\right|\right)\left(\sum_{j_{2} =j_{3}}\left|\left(\Delta_{2}\right)_{j_{2} j_{3}}\right|\right)\\
\notag
&\leqslant&\frac{K(1+\kappa)^{2}}{\tau_{1}^{2}} \sqrt{\frac{\log p q}{n}} \sqrt{\frac{s_{1}}{p}} \left(T_{4}+T_{5}\right) +\frac{K C}{\tau_{1} p q} \sqrt{\frac{\log p q}{n}}\sqrt{q}  \sqrt{\frac{\left(q+\zeta_{2}\right) \log q}{np }}\sum_{\left(j_{1} \neq j_{4}\right) \in S_{1}^{c}}\left|\Omega_{j_{1} j_{4}}\right|. \\
\label{U2eq1}
\end{eqnarray}}
Finally, note that 
{\footnotesize
\begin{eqnarray}
\notag
U_3&=&A_{n}\left(\sum_{j_{1}\neq j_{4}}\left|\left(\Delta_{1}\right)_{j_{1} j_{4}}\right|\right)\left(\sum_{j_{2} \neq j_{3}}\left|\left(\Delta_{2}\right)_{j_{2} j_{3}}\right|\right)\\
\notag
&=&A_{n} \left(\sum_{j_{1} \neq j_{4} }\left|\left(\Delta_{1}\right)_{j_{1}  j_{4}}\right|\right)\left(\sum_{\left(j_{2} \neq j_{3}\right) \in S_{2}}\left|\left(\Delta_{2}\right)_{j_{2}  j_{3}}\right|+\sum_{\left(j_{2} \neq j_{3}\right) \in S_{2}^c}\left|\left(\Delta_{2}\right)_{j_{2}  j_{3}}\right|\right) \\
\label{U3eq1}
&\leqslant & A_{n}\left(p\left\|\Deltabf_{1}\right\|_{F} \sqrt{s_{2}}\left\|\Deltabf_{2}\right\|_{F}+p\left\|\Deltabf_{1}\right\|_{F} \sum_{\left(j_{2} \neq j_{3}\right) \in S_{2}^{c}}\left|\Gamma_{j_{2} j_{3}}\right|\right) \\
\notag
& \leqslant& \frac{K(1+\kappa)^{2}}{\tau_{1}^{2}} \sqrt{\frac{\log p q}{n}} \sqrt{\frac{s_{2}p}{q}} \left(T_{4}+T_{5}\right)+\frac{K C}{\tau_{1}  q} \sqrt{\frac{\log p q}{n}} \sqrt{\frac{\left(p+\zeta_{1}\right) \log p}{n q}}\sum_{\left(j_{2} \neq j_{3}\right) \in S_{2}^{c}}\left|\Gamma_{j_{2} j_{3}}\right|\\
\label{U3eq2}
\end{eqnarray}}
where (\ref{U3eq1}) follows from the Cauchy-Schwarz inequality, and (\ref{U3eq2}) follows from (\ref{U21eq4}) and (\ref{U21eq6}) by multiplying $\sqrt{p}$ on both sides of those inequalities. Similarly, by decomposing the first factor of $U_{3}$ instead of the second factor and multiplying $\sqrt{q}$ on both sides of inequality (\ref{U2eq1}), we obtain
{\footnotesize
\begin{equation}
\label{U3eq3}
U_{3}\leqslant\frac{K(1+\kappa)^{2}}{\tau_{1}^{2}} \sqrt{\frac{\log p q}{n}} \sqrt{\frac{s_{1}q}{p}} \left(T_{4}+T_{5}\right) +\frac{K C}{\tau_{1} p } \sqrt{\frac{\log p q}{n}}  \sqrt{\frac{\left(q+\zeta_{2}\right) \log q}{np }}\sum_{\left(j_{1} \neq j_{4}\right) \in S_{1}^{c}}\left|\Omega_{j_{1} j_{4}}\right|. 
\end{equation}}

By combining all the bounds above for $U_1$, $U_2$ and $U_3$ along with Assumption 5 (using bound (\ref{U3eq3}) for $U_3$ if $ \frac{r_n\log p q}{n} \longrightarrow 0$ and using bound (\ref{U3eq2}) for $U_3$ if $ \frac{r_n'\log p q}{n} \longrightarrow 0$), it follows that there exists a sequence of constants 
$\{c_{2,n}\}_{n \geqslant 1}$ with $c_{2,n} \rightarrow 0$ as $n \rightarrow \infty$,
such that 
\begin{equation} \label{T31bound}
|T_{31}|\leqslant U_1+U_2+U_3 \leqslant c_{2,n} (T_4 + T_5 + T_6) \end{equation}

\noindent
on the event $C_n$ for large enough $n$. Combining this with the bound in (\ref{woT31}), we obtain
{\footnotesize
\begin{eqnarray*}
&&\inf _{\tilde{\Deltabf}_{1} \in \mathcal{A}_p, \tilde{\Deltabf}_{2} \in \mathcal{B}_q}\left\{g(\Omegabf, \Gammabf)-g\left(\Omegabf_{0}, \Gammabf_{0}\right)\right\}\\
&=&\inf _{\tilde{\Deltabf}_{1} \in \mathcal{A}_p, \tilde{\Deltabf}_{2} \in \mathcal{B}_q}\left\{T_{1}+T_{2}+T_{3}+T_{4}+T_{5}+T_{6}+T_{7}\right\}\\
&\geqslant& \inf _{\tilde{\Deltabf}_{1} \in \mathcal{A}_p, \tilde{\Deltabf}_{2} \in \mathcal{B}_q}\left\{\left(1-c_{2,n}-\left(\frac{2 K}{\tau_{1} C}+\frac{2B}{\tau_{1} C}+\frac{1}{2}\right) (1+\kappa)^{2}\right)(T_{4}+ T_{5})+ (\frac{1}{2}-c_{2,n})T_{6} \right\}
\end{eqnarray*}}
on event $C_n$ for large enough $n$.
\noindent
Now choose the constant $C = \frac{16 K}{\tau_{1}}+\frac{16B}{\tau_{1}}$ and the constant $\kappa = 0.01$. Since $c_{2,n} \longrightarrow 0$, it follows that $c_{2,n} \leqslant \frac{1}{4}$ eventually. Hence 
\begin{eqnarray}
\notag
&&\inf_{\tilde{\Deltabf}_{1} \in \mathcal{A}_p, \tilde{\Deltabf}_{2} \in \mathcal{B}_q}\left\{g(\Omegabf, \Gammabf)-g\left(\Omegabf_{0}, \Gammabf_{0}\right)\right\}\\
\notag
&\geqslant& \inf _{\tilde{\Deltabf}_{1} \in \mathcal{A}_p, \tilde{\Deltabf}_{2} \in \mathcal{B}_q} \left\{ 0.112 (T_4 + T_5) \right\}\\
\notag
&\geqslant& 0.112 \left( \frac{1}{2 p\left(1+\kappa\right)^{2}}\left(C^{2} \alpha_{1}^{2}+C^{2} \beta_{1}^{2}\right)+\frac{1}{2 q\left(1+\kappa\right)^{2}}\left(C^{2} \alpha_{2}^{2}+C^{2} \beta_{2}^{2}\right)\right)\\
\label{gdiffpositive}
&>& 0
\end{eqnarray}
on $C_n$ for large enough $n$.
\noindent
Since $P_0 (C_n) \rightarrow 1 \mbox{ as } n \rightarrow \infty$, we have thereby proved (\ref{identity}), and have established the required 
result. 
\end{proof}

\section{Errors in previous consistency proofs}\label{Error}

\noindent
In this section, we carefully examine the errors in the previous 
consistency arguments for the Sparse SMGM estimator, and also highlight the relevant features/innovations in our argument from Section \ref{high:dim:consistency}.

\subsection{The proof of \texorpdfstring{\cite[Theorem 1]{leng2012sparse}}{Theorem 1 in Leng (2012)}}

\noindent
We start by explaining the mistakes and lapses in the consistency proof of the SMGM estimator in \cite[Theorem 1]{leng2012sparse}. As in Section \ref{high:dim:consistency}, the authors in \cite{leng2012sparse} also pursue the high level strategy of showing that with high probability the infimum of the objective function $g$ over an appropriate set is strictly larger than $g(\Omegabf_0, \Gammabf_0)$. There is however, a key and consequential difference. While the argument in Section \ref{high:dim:consistency} considers infimum over the set $\tilde{\Deltabf}_{1} \in \mathcal{A}_p, \tilde{\Deltabf}_{2} \in \mathcal{B}_q$, the proof in \cite{leng2012sparse} considers infimum over the set $\Deltabf_{1} \in \mathcal{A}_p', \Deltabf_{2} \in \mathcal{B}_q'$, where 
{\small
\begin{eqnarray*}
\mathcal{A}_p' &:=& \left\{\mathbf{M}: \; \mathbf{M}=\alpha_{1}' \Rbf_{p}+\beta_{1} \Dbf_{p}, \; 
\Dbf_p \in \mathcal{D}_p, \; \Rbf_p \in \mathcal{R}_p \mbox{ and } \|\Dbf_p\|_F = \|\Rbf_p\|_F = C \right\}, \\
\mathcal{B}_q' &:=& \left\{\mathbf{M}: \; \mathbf{M}=\alpha_{2}' \Rbf_{q}+\beta_{2} \Dbf_{q}, \; 
\Dbf_q \in \mathcal{D}_q, \; \Rbf_q \in \mathcal{R}_q \mbox{ and } \|\Dbf_q\|_F = \|\Rbf_q\|_F = C \right\}. 
\end{eqnarray*}
and
\begin{eqnarray*}
\alpha_{1}'=\left\{s_{1} \log p /(n q)\right\}^{1 / 2},\alpha_{2}'=\left\{s_{2} \log q /(n p)\right\}^{1 / 2}.
\end{eqnarray*}}
As in the proof of Theorem \ref{Fnormcr}, the difference $g\left(\Omegabf_{0}+\Deltabf_{1}, \Gammabf_{0}+\Deltabf_{2}\right)-g\left(\Omegabf_{0}, \Gammabf_{0}\right)$ is expressed as the sum $T_1 + T_2 + \cdots + T_7$, and the strategy is to use the non-negative terms $T_4, T_5$ and $T_6$ to control/bound the other terms. The errors in the proof of \cite[Theorem 1]{leng2012sparse} arise chiefly in the analysis/bounding of the term $T_3$. Similar to (\ref{t3:decomposition}), the term $T_3$ is broken up as a sum of two parts $T_{31}$ and $T_{32}$ in the proof of \cite[Theorem 1]{leng2012sparse}, and these terms are bounded using separate arguments. We examine both these arguments below. 

\subsubsection{Bound for \texorpdfstring{$T_{32}$}{T32}} \label{T32}

\noindent
The authors in \cite{leng2012sparse} employ 
the following strategy to bound the term $|T_{32}|$. First, the following lower bounds are established for $T_4$ and $T_5$. 
\begin{eqnarray}
\label{an}
&&T_{4} \geqslant(2 p)^{-1}\left\{\tau_{1}^{-1}+a_n\right\}^{-2}\left(C^{2} {\alpha_1'}^{2}+C^{2} \beta_{1}^{2}\right)\\
\label{bn}
&&T_{5} \geqslant(2 q)^{-1}\left\{\tau_{1}^{-1}+b_n\right\}^{-2}\left(C^{2} {\alpha_2'}^{2}+C^{2} \beta_{2}^{2}\right),
\end{eqnarray}
where $a_n$ and $b_n$ are constants which converge to $0$ as $n \rightarrow \infty$. Then, an upper bound for $T_{32}$ in terms of $T_4$ and $T_5$ is derived by noting that for $\Deltabf_{1} \in \mathcal{A}_p'$ and $\Deltabf_{2} \in \mathcal{B}_q'$, 
\begin{eqnarray}
\notag
   |T_{32}| &\leqslant &1/\sqrt{pq}\tau_1^{-2} (C^{2} {\alpha_1'}^{2}+C^{2} \beta_{1}^{2})(C^{2} {\alpha_2'}^{2}+C^{2} \beta_{2}^{2}) \\
   \notag
   &\leqslant & (2p)^{-1}\tau_1^{-2} (C^{2} {\alpha_1'}^{2}+C^{2} \beta_{1}^{2})+(2q)^{-1}\tau_1^{-2}(C^{2} {\alpha_2'}^{2}+C^{2} \beta_{2}^{2}) \\
%    \label{tau_1^2}
%    &\leqslant & (2p)^{-1}\tau_1^{2} (C^{2} \alpha_{1}^{2}+C^{2} \beta_{1}^{2})+(2q)^{-1}\tau_1^{2}(C^{2} \alpha_{2}^{2}+C^{2} \beta_{2}^{2})\\
    \label{tau_1^2}
    &\leqslant& T_4 + T_5
\end{eqnarray}
with probability converging to 1 as $n \rightarrow \infty$.
\noindent
This line of attack has the following problems. 
\begin{itemize}
    \item The inequality (\ref{tau_1^2}) is erroneous for two reasons. First, the constants $a_n$ and $b_n$ are in the lower bounds (\ref{an}) and (\ref{bn}) for $T_4, T_5$ are ignored. While these constants vanish in the limit, they cannot be ignored in the above bounds which need to hold {\it for every large enough $n$}. Second, the $\tau_1$ related terms in the lower bounds (\ref{an}) and (\ref{bn}) are close to $\tau_1^2$, whereas the relevant $\tau_1$ related terms in the  upper bound for $|T_{32}|$ are $\tau_1^{-2}$. By definition, $\tau_1 \leqslant \tau_1^{-1}$, where a strict inequality $\tau_1 < \tau_1^{-1}$ holds unless $\Sigmabf_0 = {\bf I}_p$ and $\Psibf_0 = {\bf I}_q$. Hence, even if $a_n$ and $b_n$ are ignored, the last inequality in (\ref{tau_1^2}) holds in the opposite direction. 
    \item Even if (\ref{tau_1^2}) was accurate, the entirety of the terms $T_4$ and $T_5$ have now been used to control $T_{32}$, leaving no portions of these terms to control other terms like $T_{31}$, $T_1$, $T_2$ and $T_7$. 
    \item A strict inequality is needed in (\ref{tau_1^2}) instead of a less than or equal to condition. 
\end{itemize}

\noindent
%?%
We now discuss the key elements of our argument (Section \ref{high:dim:consistency}) that enable us to avoid the errors delineated above. A salient innovation in our proof is the use of the neighborhood $\tilde{\Deltabf}_{1} \in \mathcal{A}_p, \tilde{\Deltabf}_{2} \in \mathcal{B}_q$ instead of $\Deltabf_{1} \in \mathcal{A}_p', \Deltabf_{2} \in \mathcal{B}_q'$ to search for the local minimizer (recall that $\tilde{\Deltabf}_{1}=\Omegabf_{0}^{-1 / 2} \Deltabf_{1} \Omegabf_{0}^{-1 / 2}, \tilde{\Deltabf}_{2}=\Gammabf_{0}^{-1 / 2} \Deltabf_{2} \Gammabf_{0}^{-1 / 2}$). This approach provides a much tighter inequality for upper bounding $|T_{32}|$ using a significantly smaller portion of $T_{4}$ \& $T_{5}$, and thereby helps prevent the relevant errors.

In particular, note that $T_{32}=\frac{1}{pq} \operatorname{tr}\left(\Deltabf_{1} \Sigmabf_{0}\right) \operatorname{tr}\left(\Deltabf_{2} \Psibf_{0}\right)$. If as in \cite{leng2012sparse}, one uses the constraint $\Deltabf_{1} \in \mathcal{A}_p'$, the 
tightest upper bound for $\operatorname{tr}\left(\Deltabf_{1} \Sigmabf_{0}\right)$ is obtained by  $$|\operatorname{tr}\left(\Deltabf_{1}\Sigmabf_{0}\right)|\leqslant \tau_1^{-1}p^{1/2}\|\Deltabf_1\|_F\leqslant 
 \tau_1^{-1}p^{1/2}(C^2{\alpha_1'}^{2}+C^2\beta_1^2)^{1/2}.$$
 The introduction of the constant $\tau_1^{-1}$ is unavoidable in the upper bound as $\Deltabf_1$ and $\Sigmabf_0$ need to be controlled separately in this setup. On the other hand, by working with the constraint $\tilde{\Deltabf}_1 \in \mathcal{A}_p$, we are able to show $$|\operatorname{tr}\left(\Deltabf_{1}\Sigmabf_{0}\right)|=|\operatorname{tr}(\tilde{\Deltabf}_1)|=p^{1/2}C \beta_1,$$ 
 which is an exact equality and avoids introducing the factor $\tau_1^{-1}$ in the bound. A similar analysis holds for the bounds for the term $\operatorname{tr}\left(\Deltabf_{2} \Psibf_{0}\right)$. 
 At the same time, authors in \cite{leng2012sparse}, under the constraint 
 $\Deltabf_1 \in \mathcal{A}_p$, obtain a lower bound for $T_4$ as 
 \begin{eqnarray}
 \notag
 T_{4} & \geqslant& p^{-1}\left\|\operatorname{vec}\left(\boldsymbol{\Delta}_{1}\right)\right\|_{2}^{2} \int_{0}^{1}(1-w) \min _{0<w<1} \nu_{1}\left(\boldsymbol{\Omega}_{w}^{-1} \otimes \boldsymbol{\Omega}_{w}^{-1}\right) d w \\ 
 \notag
 & \geqslant& (2 p)^{-1}\left\|\operatorname{vec}\left(\boldsymbol{\Delta}_{1}\right)\right\|_{2}^{2} \min _{0<w<1} \nu_{p}^{-2}\left(\boldsymbol{\Omega}_{w} \right)\\
 \notag
 & \geqslant& (2 p)^{-1}\left\|\operatorname{vec}\left(\boldsymbol{\Delta}_{1}\right)\right\|_{2}^{2} (\|\Omegabf_0\|+\|\Deltabf_1\|)^{-2}\\
 \label{T42terms}
&\geqslant&(2 p)^{-1}\left(C^{2} {\alpha_1'}^{2}+C^{2} \beta_{1}^{2}\right)\left\{\tau_{1}^{-1}+a_n\right\}^{-2}.  
 \end{eqnarray}
 Here, for a symmetric matrix $\Abf$, we use $\|\Abf\|$ to denote the spectral norm of $\Abf$, and $a_n$ is $o(1)$. Again, the introduction of the term $\tau_1^{-1}$ is unavoidable since the terms 
$\left\|\operatorname{vec}\left(\boldsymbol{\Delta}_{1}\right)\right\|_{2}^{2}$ and $ (\|\Omegabf_0\|+\|\Deltabf_1\|)^{-2}$ need to be controlled separately in this setup. On the other hand, in our setup, with $\tilde{\Deltabf}_{1}\in \mathcal{A}_p$, first, as in (\ref{T4eq1}), one can express 
 \begin{equation}
\Omegabf_{w}^{-1} \otimes \Omegabf_{w}^{-1}
=(\Omegabf_{0}^{-1 / 2}\left(\Ibf+w \tilde{\Deltabf}_{1}\right)^{-1} \Omegabf_{0}^{-1 / 2}) \otimes (\Omegabf_{0}^{-1 / 2}\left(\Ibf+w \tilde{\Deltabf}_{1}\right)^{-1} \Omegabf_{0}^{-1 / 2}). 
 \end{equation}

\noindent
This allows us to use the constraint on $\tilde{\Deltabf}_1$ to show that for an arbitrary $\kappa > 0$, $\Omegabf_{w}^{-1} \otimes \Omegabf_{w}^{-1} \succcurlyeq (1+\kappa)^{-2} \Omegabf_{0}^{-1} \otimes \Omegabf_{0}^{-1}$ in the Loewner ordering for large enough $n$.
Now, using 
\begin{equation}
 \operatorname{vec}\left(\Deltabf_{1}\right)^{\top} \left(\Omegabf_{0}^{-1} \otimes \Omegabf_{0}^{-1}\right) \operatorname{vec}\left(\Deltabf_{1}\right)=\left\|\tilde{\Deltabf}_{1}\right\|_{F}^{2}=C^2\alpha_{1}^2+C^2\beta_1^2   
\end{equation}
we are able to establish the  lower bound 
\begin{equation}
 T_4 \geqslant \frac{\left\|\tilde{\Deltabf}_{1}\right\|_{F}^{2}}{2 p\left(1+\kappa\right)^{2}}=\frac{1}{2 p\left(1+\kappa\right)^{2}}\left(C^{2} \alpha_{1}^{2}+C^{2} \beta_{1}^{2}\right) 
\end{equation}
for large enough $n$, which avoids introducing terms related to $\tau_1$. A similar comparative 
analysis holds for $T_5$ lower bound. Leveraging these lower and upper bounds, we establish a tighter inequality in (\ref{absT32}) for large enough $n$, namely,
$$
|T_{32}|\leqslant \frac{\left(1+\kappa\right)^{2}}{2}\left(T_{4}+T_{5}\right).
$$
This leaves enough portion of $T_{4}+T_{5}$ to be used for bounding the remaining terms. 

\subsubsection{Bound for \texorpdfstring{$T_{31}$}{T31}} \label{T31}

\noindent
The authors in \cite{leng2012sparse} express $T_{31}$ as 
$$
T_{31} = (pq)^{-1}\operatorname{tr}\left(\Sigmabf_{0} \Deltabf_{1}\right) \operatorname{tr}\left(\Psibf_{0} \Deltabf_{2}\right)u_n. 
$$

\noindent
Note that 
$$
u_n = \frac{1}{\operatorname{tr}\left(\Sigmabf_{0} \Deltabf_{1}\right) \operatorname{tr}\left(\Psibf_{0} \Deltabf_{2}\right)} \operatorname{tr}\left(\left(\Deltabf_{1} \otimes \Deltabf_{2}\right)\left(\Sbf-\Sigmabf_{0} \otimes \Psibf_{0}\right)\right)
$$

\noindent
depends on $\Deltabf_1$ and $\Deltabf_2$. Here $\Sbf=\left(\frac{1}{n} \sum_{i=1}^{n} \operatorname{vec}\left(\Yibf^{\top}\right) \operatorname{vec}\left(\Yibf^{\top}\right)^{\top}\right)$. The authors in \cite{leng2012sparse} observe correctly that {\it for each fixed} $\Deltabf_{1} \in \mathcal{A}_p'$ and $\Deltabf_{2} \in \mathcal{B}_q'$, the law of large numbers implies that 
\begin{equation} \label{Leng:Tang:T31:conclusion}
u_n = u_n (\Deltabf_1, \Deltabf_2) \stackrel{P_0}{\rightarrow} 0. 
\end{equation}

\noindent
However, no further analysis of the term $T_{31}$ is provided. This is a critical mistake in the proof. Even if we ignore that the entirety of $T_4+T_4$ has been used to bound $|T_{32}|$, what needs to be shown is that 
{\small
\begin{eqnarray} \label{Leng:Tang:T31:need}
& & \inf_{\Deltabf_{1} \in \mathcal{A}_p', \Deltabf_{2} \in \mathcal{B}_q'} 
\{{c}_{1,n} (T_4 + T_5) - |T_{31}|\} \nonumber\\
&=& \inf_{\Deltabf_{1} \in \mathcal{A}_p', \Deltabf_{2} \in \mathcal{B}_q'} 
\{c_{1,n} (T_4 + T_5) - |(pq)^{-1}\operatorname{tr}\left(\Sigmabf_{0} \Deltabf_{1}\right) \operatorname{tr}\left(\Psibf_{0} \Deltabf_{2}\right)u_n|\} > 0 
\end{eqnarray}
}
\noindent
with $P_0$-probability converging to $1$, where $c_{1,n}$ is an appropriate sequence of constants. This can only be achieved through a {\it uniform high-probability bound} on $u_n (\Deltabf_1, \Deltabf_2)$ over the set $\Deltabf_{1} \in \mathcal{A}_p'$ and $\Deltabf_{2} \in \mathcal{B}_q'$ is needed. 

The regularity assumptions in \cite{leng2012sparse} 
(essentially equivalent to Assumptions 1-4 in this paper) are not sufficient to provide such a uniform bound, and additional assumptions are needed. Assumption 5 along with the detailed analysis in our proof (leading to the uniform bounds in equation (\ref{woT31}) and (\ref{T31bound}))  are needed to establish the required result.

\subsection{The proof of \texorpdfstring{\cite[Theorem 3]{Yin:Li:2012}}{The proof of Theorem 3}} 

\noindent
Yin and Li \cite{Yin:Li:2012} also establish high-dimensional consistency of the SMGM-lasso estimator in Theorem 3 of that paper. Similar to the proof of \cite[Theorem 1]{leng2012sparse}, their argument aims to show that the infimum of $g$ over a relevant set is strictly larger than $g(\Omegabf_0, \Gammabf_0)$ with high probability. To achieve this, again 
the difference $g\left(\Omegabf_{0}+\Deltabf_{1}, \Gammabf_{0}+\Deltabf_{2}\right)-g\left(\Omegabf_{0}, \Gammabf_{0}\right)$ is broken up as the sum of various terms, and the goal is to leverage the positive terms to bound the others. A close examination of the proof again reveals issues with the bounding of the terms $T_{32}$ and $T_{31}$ (refered to as $K_{5}$ and $K_{6}$ respectively in \cite{Yin:Li:2012}). 

\subsubsection{Bound for \texorpdfstring{$T_{32}$}{T32}} 

\noindent
For the terms $T_4$ and $T_5$ (refered to as $K_{1}$ and $K_{2}$ respectively in \cite{Yin:Li:2012}), the following lower bounds are produced: 
\begin{equation}\label{YinT4}
T_4 \geqslant \frac{q}{2}\text{tr}(\Sigmabf_0 \Deltabf_1^{\top} \Sigmabf_0 \Deltabf_1)(1+\tilde{a}_n)
\end{equation}
\noindent
and 
\begin{equation}\label{YinT5}
T_5 \geqslant \frac{p}{2}\text{tr}(\Psibf_0 \Deltabf_2^{\top} \Psibf_0 \Deltabf_2)(1+\tilde{b}_n)
\end{equation}
where $\tilde{a}_n$ and $\tilde{b}_n$ are $o(1)$. On the other hand, the upper bound obtained for $T_{32}$ is given by 
\begin{equation}\label{YinT32}
 |T_{32}| \leqslant \frac{q}{2}\text{tr}(\Sigmabf_0 \Deltabf_1^{\top} \Sigmabf_0 \Deltabf_1)+\frac{p}{2}\text{tr}(\Psibf_0 \Deltabf_2^{\top} \Psibf_0 \Deltabf_2).
\end{equation}
Combining with (\ref{YinT4}) and (\ref{YinT5}) they conclude that $|T_{32}|$ is dominated by $T_{4}+T_{5}$ with a large probability.

This succession of arguments has the following problems:
\begin{itemize}
\item There is no guarantee that the $\tilde{a}_n$ and $\tilde{b}_n$ are positive and it is possible that the upper bound for $|T_{32}|$ is smaller than the lower bound of $T_{4}+T_{5}$. While these constants vanish in the limit, they cannot be ignored in the above bounds which need to hold {\it for every large enough $n$}. 

\item Even if $\tilde{a}_n$ and $\tilde{b}_n$ were positive, almost all of $T_4+T_5$ would be used in constraining $|T_{32}|$, leaving only $\frac{q}{2}\text{tr}(\Sigmabf_0 \Deltabf_1^{\top} \Sigmabf_0 \Deltabf_1)\tilde{a}_n+\frac{p}{2}\text{tr}(\Psibf_0 \Deltabf_2^{\top} \Psibf_0 \Deltabf_2)\tilde{b}_n \leqslant
\tilde{a}_n T_4+\tilde{b}_n T_5$ to constrain/bound the remaining terms. Following the lines of logic in \cite{Yin:Li:2012}, aside from bounding $|T_{32}|$, non-vanishing factors of $ (T_4+T_5)$ are required to bound the other terms - in particular $|T_1+T_2+T_{31}| < \tilde{c}_{1,n}(T_4+T_5+T_6)$ and $|T_7| < \tilde{c}_{2,n}(T_4+T_5)$, where $\tilde{c}_{1,n}$ and $\tilde{c}_{2,n}$ are $O(1)$. However, $\tilde{a}_n,\tilde{b}_n$ are both $o(1)$, and are inadequate for the desired upper bounds for the terms $|T_1+T_2+T_{31}|$ and $|T_7|$. 
\end{itemize}

\noindent
We refer the reader to Section \ref{T32} for a detailed description of how we we avoid these issues in our proof using sets based on the quantities $\tilde{\Deltabf}_1$ and $\tilde{\Deltabf}_2$. 

\subsubsection{Bound for \texorpdfstring{$T_{31}$}{T31}} 

The argument in the proof of \cite[Theorem 3]{Yin:Li:2012} to upper bound 
$T_{31}$ avoids the error in the proof of \cite[Theorem 1]{leng2012sparse}, and provides a uniform high-probability bound for the term $u_n (\Deltabf_1, \Deltabf_2)$ (see Section \ref{T31}) over a relevant set. However, the analysis is not tight enough, using much stronger assumptions and leading to slower/looser convergence rates than necessary. 

We first look at the assumptions in \cite{Yin:Li:2012} concerning the joint behavior of $n, p, q, s_1, s_2$. The authors in \cite{Yin:Li:2012} require that for some $k,l>1$, 
 $$q\left(p+s_{1}\right) (\log pq)^k /n= O(1), \; \; \; p\left(q+s_{2}\right) (\log pq)^l /n =O(1),$$

 \noindent
 which in particular implies (assuming $p \rightarrow \infty$ or $q \rightarrow \infty$) that 
 \begin{equation} \label{yin:li:comparison}
 q\left(p+s_{1}\right) \log pq /n\longrightarrow 0, \; \; \; p\left(q+s_{2}\right) \log pq /n \longrightarrow 0. 
 \end{equation}
 
\noindent
On the contrary, in Assumption 1 of this paper, we require
 $$\left(p+s_{1}\right) \log p /(n q) \longrightarrow 0, \; \; \; \left(q+s_{2}\right) \log q /(n p) \longrightarrow 0, 
 $$
 while in Assumption 5 (not including terms with $\lambda_1$ and $\lambda_2$), we require 
 $$\max \left(1, \frac{s_{2}}{q}, \frac{s_{1} q}{p}\right) \log (pq)/n 
\longrightarrow 0 \text{ or } \max \left(1, \frac{s_{1}}{p}, \frac{s_{2} p}{q}\right) \log(pq)/n \longrightarrow 0.$$
It can be easily seen that the constraint in (\ref{yin:li:comparison}) is stronger than both the constraints above. Note the assumptions involving $\lambda_1$ and $\lambda_2$ in this paper and for \cite[Theorem 3]{Yin:Li:2012} lead to different acceptable ranges for these two quantities, and are therefore not comparable. 

Finally, the Frobenius norm convergence rates established in \cite[Theorem 3]{Yin:Li:2012} are 
$$
\left\|\hat{\Omegabf}-\Omegabf_0\right\|_{\mathrm{F}}^2=O_{P_0}\left\{q(p+s_1)(\log p+\log q) /n \right\}
$$
and
$$
\left\|\hat{\Gammabf}-\Gammabf_0\right\|_{\mathrm{F}}^2=O_{P_0}\left\{p(q+s_2)(\log p+\log q) /n \right\}. 
$$

\noindent
These rates are significantly slower than the convergence rates 
$$
\left\|\hat{\Omegabf}-\Omegabf_0\right\|_{\mathrm{F}}^2=O_{P_0}\left\{\left(p+s_1\right) \log p /(n q)\right\}
$$
and
$$
\left\|\hat{\Gammabf}-\Gammabf_0\right\|_{\mathrm{F}}^2=O_{P_0}\left\{\left(q+s_2\right) \log q /(n p)\right\} .
$$

\noindent
established in Theorem \ref{Fnormcr}.

\section{High-dimensional convergence rates for heuristic estimator} 
\label{convergence:rate:heuristic:estimator}
As discussed in the introduction, in the current matrix-variate context, low-dimensional structures may not always be necessary for achieving consistency in high-dimensional settings. We rigorously establish this by studying the asymptotic properties of the heuristic estimators of $\Sigmabf$ and $\Psibf$ developed in \cite{SVV:2008}. The authors in \cite{SVV:2008} consider the special case when $\Psibf_{jj} = 1$ for every $1 \leqslant j \leqslant q$. In this case, each vector in the collection of $nq$ columns gathered from all the data matrices has a $p$-variate normal distribution with mean ${\underline 0}$ and covariance matrix $\Sigmabf$. While these $nq$ vectors are not all independent, \cite{SVV:2008} argue that their sample covariance matrix, given by 
\begin{equation} \label{hest}
\hat{\Sigmabf}_H := \frac{1}{nq} \sum_{i=1}^n \Yibf \Yibf^{\top}, 
\end{equation}
should be a reasonable heuristic estimator for $\Sigmabf$. To support this, they demonstrate that $\hat{\Sigmabf}_H$ is an unbiased and consistent estimator of 
$\Sigmabf$ in the classical asymptotic setting where $n \rightarrow \infty$ and $p, q$ stay fixed. The estimator $\hat{\Sigmabf}_H$ is then used to construct a similar sample covariance estimator for $\Psibf$, which is shown by \cite{SVV:2008} to be consistent for $\Psibf$. We consider a similar but slightly different heuristic estimator of $\Psibf$, given by 
\begin{equation} \label{psiest}
\hat{\Psibf}_H := \frac{1}{n \mbox{tr}(\hat{\Sigmabf}_H)} \sum_{i=1}^n 
\Yibf^{\top} \Yibf. 
\end{equation}

\noindent
We will show that under an appropriate identifiability constraint, $\hat{\Sigmabf}_H$ and $\hat{\Psibf}_H$ serve as effective estimators for $\Sigmabf$ and $\Psibf$ {\it even when the constraint $\Psibf_{jj} = 1$ for every $1 \leqslant j \leqslant q$ is removed}. We now establish their spectral norm consistency rates in a high-dimensional asymptotic regime where $p$ and/or $q$ are allowed to grow with $n$. 

We first specify the true data generating model. Under this model, for each $n$, the random matrices ${\bf Y}_{1}, {\bf Y}_{2}, \cdots, {\bf Y}_{n}$ are independent and identically distributed with a matrix normal distribution, which has mean $\mathbf{0}$, row covariance matrix $\mathbf{\Sigmabf}_0$ and column covariance matrix $\mathbf{\Psibf}_0$.
Let $\Omegabf_{0}=\Sigmabf_{0}^{-1}$ and $\Gammabf_{0}=\Psibf_{0}^{-1}$ respectively denote the row and column precision matrices. Following \cite{Theobald:Wuttke:2006}, we use the identifiability constraint $\operatorname{tr}(\Psibf_0) = q$. Note that, as in Section \ref{high:dim:consistency}, since $p$ and $q$ are allowed to change with the sample size $n$, the data matrices and the covariance/precision matrices all depend on $n$. However, we suppress their dependence on $n$ for simplicity of notation.

In order to establish our asymptotic results, we need the following mild regularity assumptions. The 
statement of each assumption below is followed by a interpretation/discussion of that assumption.

\noindent
\textit{Assumption} H1. There exists constant $\tau_{1}>0$ such that for all $n \geqslant 1$,

\begin{eqnarray*}
0<\tau_{1}<\nu_{1}\left(\Sigmabf_{0}\right) &\leqslant& \nu_{p}\left(\Sigmabf_{0}\right)<1 / \tau_{1}<\infty, \\
0<\tau_{1}<\nu_{1}\left(\Psibf_{0}\right) &\leqslant& \nu_{q}\left(\Psibf_{0}\right)<1 / \tau_{1}<\infty,
\end{eqnarray*}

\noindent where $\nu_{1}(\Abf) \leqslant \nu_{2}(\Abf) \leqslant \cdots \leqslant \nu_{m}(\Abf)$ are the eigenvalues of an $m$-dim symmetric $\Abf$.
 This standard assumption is identical to Assumption 2 in Section \ref{high:dim:consistency}, and requires the eigenvalues of the row and column covariance matrices $\Sigmabf_{0}$ and $\Psibf_{0}$ to be uniformly (in $n$) bounded away from 0 and $\infty$. 

\medskip
 
\noindent
\textit{Assumption} H2. $ \frac{\max (p,\log n)}{q}=o(n)$ and $ \frac{\max (q,\log n)}{p}=o(n)$. 
\noindent
This assumption controls the joint behavior/growth of $n, p, q$. The analogous assumption in Section \ref{high:dim:consistency} is Assumption 1. Assumption 1 is stronger than Assumption H2, which is seemingly counter-intuitive. One would think that imposing a low-dimensional sparse structure on $\Omegabf_0$ and $\Gammabf_0$ would lead to weaker requirements/assumptions for establishing consistency in Section \ref{high:dim:consistency}. However, we would like to point out that Assumption H2 will be used to obtain a {\it spectral norm} convergence rate for the heuristic estimator, whereas Assumption 1 in Section \ref{high:dim:consistency} is used to obtain a Frobenius norm convergence rate for the SMGM estimator. If one wants to establish Frobenius norm convergence rates for the heuristic estimator, the required Assumption H2 would need to be strengthened to 
$$
\frac{\max (p^2,\log n)}{q}=o(n) \mbox{ and } \frac{\max (q^2,\log n)}{p}=o(n), 
$$

\noindent
a stronger assumption than Assumption 1, as expected. 

We now state and prove our high-dimensional consistency result for the heuristic estimator $(\hat{\Sigmabf}_H, \hat{\Psibf}_H)$. Recall that for a symmetric matrix $\Abf$, we use $\|\Abf\|$ to denote the spectral norm of $\Abf$.
\begin{theorem} \label{HEmain}(Spectral norm convergence rates for heuristic estimators) Under Assumptions H1-H2, as $n\rightarrow\infty$, for arbitrarily chosen constant $\tilde{C}_{0}$, heuristic covariance estimators $\hat{\Sigmabf}_H$ and $\hat{\Psibf}_{H}$ proposed in (\ref{hest}) and (\ref{psiest}) 
satisfy 
\begin{eqnarray*}
\|\hat{\Sigmabf}_H-\Sigmabf_0\| &=& O_{P_0} \left( \sqrt{\frac{\max (p,\log n)}{nq}} \right) \mbox{ and }\\
\|\hat{\Psibf}_{H}-\Psibf_{0}\| &=& O_{P_0} \left( 
\sqrt{\max\left(\frac{\max(p,\log n)}{nq},\frac{\max(q,\log n)}{np}\right)} \right). 
\end{eqnarray*}

\noindent
The corresponding precision matrix estimators $\hat{\Omegabf}_H = \hat{\Sigmabf}_H^{-1}$ and $\hat{\Gammabf}_H = \hat{\Psibf}_H^{-1}$ also satisfy 
 \begin{eqnarray*}
\|\hat{\Omegabf}_H-\Omegabf_0\| &=& O_{P_0} \left( \sqrt{\frac{\max (p,\log n)}{nq}} \right) \mbox{ and } \\
\|\hat{\Gammabf}_{H}-\Gammabf_{0}\| &=& O_{P_0} \left( 
\sqrt{\max\left(\frac{\max(p,\log n)}{nq},\frac{\max(q,\log n)}{np}\right)} \right). 
\end{eqnarray*}
\end{theorem}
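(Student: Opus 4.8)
The plan is to reduce both spectral-norm bounds to a single concentration inequality for a ``sandwiched'' Gaussian sample covariance matrix, apply it once with the roles $(\Sigmabf_0,\Psibf_0)$ and once with the roles swapped, and then handle the ratio normalization in $\hat{\Psibf}_H$ and the passage to precision matrices by elementary perturbation estimates.

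First, write $\Yibf=\Sigmabf_0^{1/2}\mathbf{G}_i\Psibf_0^{1/2}$, where $\mathbf{G}_1,\dots,\mathbf{G}_n$ are independent $p\times q$ matrices with i.i.d.\ standard normal entries, and stack them as $\mathbf{G}=[\mathbf{G}_1\mid\cdots\mid\mathbf{G}_n]$, a $p\times nq$ matrix with i.i.d.\ $N(0,1)$ entries. A direct computation gives $\sum_{i=1}^n\Yibf\Yibf^\top=\Sigmabf_0^{1/2}\mathbf{G}(\Ibf_n\otimes\Psibf_0)\mathbf{G}^\top\Sigmabf_0^{1/2}$, so that, with $\mathbf{M}:=\Ibf_n\otimes\Psibf_0$ (for which $\operatorname{tr}\mathbf{M}=n\operatorname{tr}\Psibf_0=nq$ by the identifiability constraint),
\[
\hat{\Sigmabf}_H=\Sigmabf_0^{1/2}\Bigl(\tfrac{1}{nq}\mathbf{G}\mathbf{M}\mathbf{G}^\top\Bigr)\Sigmabf_0^{1/2},\qquad E_{P_0}[\hat{\Sigmabf}_H]=\Sigmabf_0 .
\]
The key lemma I would establish is: for a $p\times N$ matrix $\mathbf{G}$ with i.i.d.\ $N(0,1)$ entries and a deterministic $\mathbf{M}\succcurlyeq\mathbf{0}$,
\[
\Bigl\|\tfrac{1}{N}\mathbf{G}\mathbf{M}\mathbf{G}^\top-\tfrac{\operatorname{tr}\mathbf{M}}{N}\Ibf_p\Bigr\|\;\lesssim\;\frac{\|\mathbf{M}\|_F}{N}\sqrt{p+t}\;+\;\frac{\|\mathbf{M}\|}{N}(p+t)
\]
with probability at least $1-e^{-t}$, for every $t>0$. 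This follows from a $1/4$-net argument over the unit sphere of $\mathbb{R}^p$ (net cardinality $\leqslant 9^p$, with $\|\mathbf{A}\|\leqslant 2\max_{u}|u^\top\mathbf{A}u|$ for symmetric $\mathbf{A}$) combined with the Hanson--Wright inequality applied to $u^\top\mathbf{G}\mathbf{M}\mathbf{G}^\top u=g^\top\mathbf{M}g$ with $g=\mathbf{G}^\top u\sim N(\mathbf{0},\Ibf_N)$, followed by a union bound with $t$ replaced by $t+p\log 9$. Applying this with $N=nq$ and $\mathbf{M}=\Ibf_n\otimes\Psibf_0$ — for which $\|\mathbf{M}\|=\nu_q(\Psibf_0)\leqslant\tau_1^{-1}$ and $\|\mathbf{M}\|_F^2=n\|\Psibf_0\|_F^2\leqslant n\,\nu_q(\Psibf_0)\operatorname{tr}\Psibf_0\leqslant nq/\tau_1$, so $\|\mathbf{M}\|_F/\operatorname{tr}\mathbf{M}\asymp(nq)^{-1/2}$ — and taking $t$ a suitable multiple of $\log n$, Assumption~H2 renders the second term of lower order and gives $\|\tfrac{1}{nq}\mathbf{G}\mathbf{M}\mathbf{G}^\top-\Ibf_p\|=O_{P_0}(\sqrt{\max(p,\log n)/(nq)})$; multiplying by $\nu_p(\Sigmabf_0)\leqslant\tau_1^{-1}$ yields the claimed bound for $\|\hat{\Sigmabf}_H-\Sigmabf_0\|$. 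The decisive point — and the step I expect to be the crux — is that the Frobenius term scales like $\|\mathbf{M}\|_F/\operatorname{tr}\mathbf{M}\asymp(nq)^{-1/2}$ rather than $n^{-1/2}$: this is precisely the ``effective sample size $\asymp nq$'' phenomenon, captured only by keeping the sandwiched form $\mathbf{G}\mathbf{M}\mathbf{G}^\top$ intact (bounding the $n$ blocks $\mathbf{G}_i\Psibf_0\mathbf{G}_i^\top$ separately recovers only the suboptimal $n^{-1/2}$).

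For $\Psibf$, the symmetric identity $\sum_{i=1}^n\Yibf^\top\Yibf=\Psibf_0^{1/2}\mathbf{H}^\top(\Ibf_n\otimes\Sigmabf_0)\mathbf{H}\,\Psibf_0^{1/2}$, with $\mathbf{H}=[\mathbf{G}_1;\cdots;\mathbf{G}_n]$ an $np\times q$ matrix of i.i.d.\ $N(0,1)$ entries, lets me apply the same lemma to $\mathbf{H}^\top$ with $\mathbf{M}'=\Ibf_n\otimes\Sigmabf_0$ ($\operatorname{tr}\mathbf{M}'=n\operatorname{tr}\Sigmabf_0\geqslant np\tau_1$, $\|\mathbf{M}'\|\leqslant\tau_1^{-1}$, $\|\mathbf{M}'\|_F^2\leqslant n\operatorname{tr}(\Sigmabf_0)/\tau_1$), giving for the rescaled matrix $\tilde{\Psibf}_H:=\{n\operatorname{tr}(\Sigmabf_0)\}^{-1}\sum_i\Yibf^\top\Yibf$ (which has mean $\Psibf_0$) the bound $\|\tilde{\Psibf}_H-\Psibf_0\|=O_{P_0}(\sqrt{\max(q,\log n)/(np)})$. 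It remains to pass from $\tilde{\Psibf}_H$ to $\hat{\Psibf}_H=\{\operatorname{tr}(\Sigmabf_0)/\operatorname{tr}(\hat{\Sigmabf}_H)\}\,\tilde{\Psibf}_H$ (an identity that follows from the definitions, since $n\operatorname{tr}(\hat{\Sigmabf}_H)=q^{-1}\operatorname{tr}(\sum_i\Yibf^\top\Yibf)$). Here $\operatorname{tr}(\hat{\Sigmabf}_H)=\tfrac{1}{nq}\sum_i\|\Yibf\|_F^2$ is $\tfrac1{nq}$ times a single Gaussian quadratic form with matrix $\Ibf_n\otimes\Psibf_0\otimes\Sigmabf_0$ and mean $\operatorname{tr}\Sigmabf_0$; a scalar Hanson--Wright bound (no net needed) together with $\operatorname{tr}\Sigmabf_0\asymp p$ shows $\operatorname{tr}(\hat{\Sigmabf}_H)/\operatorname{tr}(\Sigmabf_0)=1+o_{P_0}(1)$ with an error of strictly smaller order than the rates above. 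Writing $\hat{\Psibf}_H-\Psibf_0=\tfrac{\operatorname{tr}\Sigmabf_0}{\operatorname{tr}\hat{\Sigmabf}_H}(\tilde{\Psibf}_H-\Psibf_0)+\bigl(\tfrac{\operatorname{tr}\Sigmabf_0}{\operatorname{tr}\hat{\Sigmabf}_H}-1\bigr)\Psibf_0$ and taking spectral norms yields the stated rate for $\|\hat{\Psibf}_H-\Psibf_0\|$.

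Finally, Assumption~H2 forces the above rates to tend to $0$, so with $P_0$-probability tending to $1$ one has $\|\hat{\Sigmabf}_H-\Sigmabf_0\|\leqslant\nu_1(\Sigmabf_0)/2$, hence $\hat{\Sigmabf}_H\succ\mathbf{0}$ with $\|\hat{\Sigmabf}_H^{-1}\|\leqslant 2/\tau_1$; the identity $\hat{\Omegabf}_H-\Omegabf_0=-\hat{\Sigmabf}_H^{-1}(\hat{\Sigmabf}_H-\Sigmabf_0)\Sigmabf_0^{-1}$ then gives $\|\hat{\Omegabf}_H-\Omegabf_0\|\leqslant 2\tau_1^{-2}\|\hat{\Sigmabf}_H-\Sigmabf_0\|$, and likewise $\|\hat{\Gammabf}_H-\Gammabf_0\|\leqslant 2\tau_1^{-2}\|\hat{\Psibf}_H-\Psibf_0\|$, so the precision-matrix estimators inherit the same rates. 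The only genuinely delicate points are the sharp $\|\mathbf{M}\|_F/\operatorname{tr}\mathbf{M}$ scaling in the key lemma and the verification — via Assumption~H2 — that every secondary contribution (the $(p+t)/(nq)$ Hanson--Wright remainder, the trace-ratio fluctuation, and the event on which $\hat{\Sigmabf}_H$ is well-conditioned) is negligible relative to the leading $\sqrt{\max(p,\log n)/(nq)}$ and $\sqrt{\max(q,\log n)/(np)}$ terms.
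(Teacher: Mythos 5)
Your proposal is correct and follows essentially the same route as the paper: Hanson--Wright concentration for Gaussian quadratic forms combined with an $\varepsilon$-net over the unit sphere to control $\|\hat{\Sigmabf}_H-\Sigmabf_0\|$ and the rescaled $\|\tilde{\Psibf}_H-\Psibf_0\|$, the same trace-ratio factorization of $\hat{\Psibf}_H$, and the same perturbation identity $\hat{\Omegabf}_H-\Omegabf_0=-\hat{\Sigmabf}_H^{-1}(\hat{\Sigmabf}_H-\Sigmabf_0)\Sigmabf_0^{-1}$ for the precision matrices. The only differences are cosmetic: you package the concentration step as a standalone lemma for $\mathbf{G}\mathbf{M}\mathbf{G}^{\top}$ (the paper runs the identical Hanson--Wright plus covering argument inline on block-diagonal quadratic forms), and you control the trace ratio by a direct scalar Hanson--Wright bound, where the paper simply bounds $p^{-1}\left|\operatorname{tr}(\hat{\Sigmabf}_H-\Sigmabf_0)\right|$ by the already-established spectral-norm rate.
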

The proof of Theorem \ref{HEmain} is provided in 
the Appendix. 

\medskip

\noindent
{\bf Acknowledgements}. Khare's work on this paper was partially supported by NSF-DMS-2410667. 

\bibliographystyle{imsart-nameyear}
\bibliography{refs.bib}

\begin{appendix}
\section*{Appendix - Proof of Theorem 4.1}\label{appn}

We first show that $\hat{\Sigmabf}_H$ is unbiased for $\Sigmabf_0$. Note
that 
$$
E_{P_0} \left[{\bf Y}_{1} {\bf Y}_{1}^{\top}\right]=\sum_{j=1}^{q} E_{P_0} \left[ ({\bf Y}_{1})_{:j} ({\bf Y}_{1})_{:j}^{\top} \right],
$$

\noindent
where $({\bf Y}_{1})_{:j}$ denotes the $j^{th}$ column of ${\bf Y}_1$. 
Note that for any $1 \leqslant k \leqslant p$ and $1 \leqslant l \leqslant p$, the $(k,l)^{th}$ 
entry of $E_{P_0} \left[ ({\bf Y}_{1})_{:j} ({\bf Y}_{1})_{:j}^{\top} \right]$ 
is given by
\begin{eqnarray*}
E_{P_0} \left[\left(Y_{1}\right)_{k j}\left(Y_{1}\right)_{l j}\right]=\left(\Sigma_{0}\right)_{k l}\left(\Psi_{0}\right)_{j j}. 
\end{eqnarray*}

\noindent
It follows that 
$$
E_{P_0} \left[ {\bf Y}_{1} {\bf Y}_{1}^{\top} \right] = \left( \sum_{j=1}^{q} 
\left(\Psi_{0}\right)_{jj} \right) \Sigmabf_{0} = \operatorname{tr} 
\left(\Psibf_{0}\right) \Sigmabf_{0} = q \Sigmabf_{0} 
$$ 

\noindent
based on the identifiability constraint. Since ${\bf Y}_{1}, {\bf Y}_{2}, \cdots, {\bf Y}_{n}$ are identically distributed, we obtain 
\begin{equation}
\label{ESigmaH}
E_{P_0} \left[ \hat{\Sigmabf}_{H} \right] = E_{P_0} \left[ \frac{1}{n q} \sum_{i=1}^{n} 
\Yibf \Yibf^{\top} \right] = \Sigmabf_{0}.   
\end{equation}

\noindent We can similarly obtain 
\begin{equation}
\label{EPsiH}
 E_{P_0}\left[\frac{1}{n \operatorname{tr}\left(\Sigmabf_{0}\right)} \sum_{i=1}^{n} \Yibf^{\top} \Yibf\right]=\Psibf_{0}.   
\end{equation}
Equations (\ref{ESigmaH}) and (\ref{EPsiH}) explain why the estimator $\hat{\Psibf}_H$ in eq. (4) in the main paper, though not unbiased, is a reasonable estimator of $\Psibf_0$.

We now proceed to show $\hat{\Sigmabf}_{H}$ and $\hat{\Psibf}_{H}$ are consistent in spectral norm and obtain asymptotic high-dimensional spectral norm convergence rates for both these heuristic estimators.
Note that 
\begin{equation} \label{sup:norm}
\|\hat{\Sigmabf}_H - \Sigmabf_0\| = \sup _{\|\underline{x}\|_{2} = 1}\left|\underline{x}^{\top} \hat{\Sigmabf}_{H} \underline{x}-\underline{x}^{\top} \Sigmabf_{0} \underline{x} \right|
\end{equation}

\noindent
Given this, our first objective is to provide a high probability bound for the term $\left|\underline{x}^{\top} \hat{\Sigmabf}_{H} \underline{x}-\underline{x}^{\top} \Sigmabf_{0} \underline{x} \right|$ for an arbitrary ${\underline x} \in \mathbb{R}^p$ with $\|\underline{x}\|_{2} = 1$. Let $\tXibf:=\Sigmabf_{0}^{-1 / 2} \Yibf \Psibf_{0}^{-1 / 2}$ for $1 \leqslant i \leqslant n$. It follows that $\tilde{{\bf X}}_1, \tilde{{\bf X}}_2, \cdots, \tilde{{\bf X}}_n$ are i.i.d. from a matrix-variate normal distribution with mean matrix ${\bf 0}$, row covariance matrix ${\bf I}_p$, and column covariance matrix ${\bf I}_q$. Then, for an arbitrary ${\underline x} \in \mathbb{R}^p$ with $\|\underline{x}\|_{2} = 1$, properties of the trace operator and the vectorization operator imply that 
\begin{eqnarray}
\notag
\underline{x}^{\top} \hat{\Sigmabf}_{H} \underline{x}&=&\frac{1}{n q} \sum_{i=1}^{n} \underline{x}^{\top} \Yibf \Yibf^{\top} \underline{x} \\
\notag
& =&\frac{1}{n q} \sum_{i=1}^{n} \underline{x}^{\top} \Sigmabf_{0}^{1 / 2} \tXibf \Psibf_{0} \tXibf^{\top} \Sigmabf_{0}^{1 / 2} \underline{x} \\
\notag
& =&\frac{1}{n q} \sum_{i=1}^{n}  \text { tr } \left(\underline{x}^{\top} \Sigmabf_{0}^{1 / 2} \tXibf \Psibf_{0} \tXibf^{\top} \Sigmabf_{0}^{1 / 2} \underline{x}\right) \\
\notag
& =&\frac{1}{n q} \sum_{i=1}^{n} \text { tr }\left(\tXibf^{\top}\left(\Sigmabf_{0}^{1 / 2} \underline{x} \underline{x}^{\top} \Sigmabf_{0}^{1 / 2}\right) \tXibf \Psibf_{0}\right) \\
\notag
& =&\frac{1}{n q} \sum_{i=1}^{n} \operatorname{vec}\left(\tXibf\right)^{\top} \operatorname{vec}\left(\left(\Sigmabf_{0}^{1 / 2} \underline{x} \underline{x}^{\top} \Sigmabf_{0}^{1 / 2}\right) \tXibf \Psibf_{0}\right)\\
\label{MPP}
& =&\frac{1}{n q} \sum_{i=1}^{n} \operatorname{vec}\left(\tXibf\right)^{\top}\left(\Psibf_{0} \otimes\left(\Sigmabf_{0}^{1 / 2} \underline{x} \underline{x}^{\top} \Sigmabf_{0}^{1 / 2}\right)\right) \operatorname{vec}\left(\tXibf\right) \\
\label{G1}
& =:& \underline{z}^{\top} \Gbf_{1} \underline{z},
\end{eqnarray}
where equation (\ref{MPP}) follows from the mixed-product property of the Kronecker product, 
\begin{eqnarray*}
\underline{z}:=\left(
\operatorname{vec}\left(\tilde{\Xbf}_{1}\right)^{\top} \;
\operatorname{vec}\left(\tilde{\Xbf}_{2}\right)^{\top} \;
\cdots \; \operatorname{vec}\left(\tilde{\Xbf}_{n}\right)^{\top}\right)^{\top} \in 
\mathbb{R}^{npq}, 
\end{eqnarray*}
\noindent
and 
\begin{eqnarray*}
\Gbf_{1}:=\frac{1}{n q} B D_{n}\left(\Psibf_{0} \otimes\left(\Sigmabf_{0}^{1 / 2} \underline{x} \underline{x}^{\top} \Sigmabf_{0}^{1 / 2}\right)\right). 
\end{eqnarray*}

\noindent
Note that $B D_{n}\left(\Psibf_{0} \otimes\left(\Sigmabf_{0}^{1 / 2} \underline{x} \underline{x}^{\top} \Sigmabf_{0}^{1 / 2}\right)\right)$ denotes a block diagonal matrix with $n$ blocks along its main diagonal, with each diagonal block equal to $\Psibf_{0} \otimes\left(\Sigmabf_{0}^{1 / 2} \underline{x} \underline{x}^{\top} \Sigmabf_{0}^{1 / 2}\right)$. Note further that 
\begin{eqnarray}
\notag
\|\Gbf_{1}\|_{F}^{2} & =&\frac{n}{n^{2} q^{2}}\left\|\Psibf_{0} \otimes\left(\Sigmabf_{0}^{1 / 2} \underline{x} \underline{x}^{\top} \Sigmabf_{0}^{1 / 2}\right)\right\|_{F}^{2} \\
\label{G1Feq1}
& =&\frac{n}{n^{2} q^{2}}\left\|\Psibf_{0}\right\|_{F}^{2}\left\|\left(\Sigmabf_{0}^{1 / 2} \underline{x} \underline{x}^{\top} \Sigmabf_{0}^{1 / 2}\right)\right\|_{F}^{2}\\
\label{G1Feq2}
& =&\frac{n}{n^{2} q^{2}}\left\|\Psibf_{0}\right\|_{F}^{2}\left(\underline{x}^{\top} \Sigmabf_{0} x\right)^{2} \\
\label{G1Feq3}
 & \leqslant& \frac{1}{n q}\left\|\Psibf_{0}\right\|^{2}\left(\underline{x}^{\top} \Sigmabf_{0} \underline{x}\right)^{2},
\end{eqnarray}
\begin{eqnarray}
\label{G1Seq1}
\|\Gbf_{1}\| &=& \frac{1}{n q}\left\|\Psibf_{0}\right\|\left\|\Sigmabf_{0}^{1 / 2} \underline{x} \underline{x}^{\top} \Sigmabf_{0}^{1 / 2}\right\|\\
\label{G1Seq2}
&\leqslant& \frac{1}{n q}\left\|\Psibf_{0}\right\|\left(\underline{x}^{\top} \Sigmabf_{0} \underline{x}\right),    
\end{eqnarray}
where equality (\ref{G1Feq1}) follows from the property of the Frobenius norm with respect to the Kronecker product; equality (\ref{G1Feq2}) follows from the fact that for any vector $u$, $\left\|\underline{u} \underline{u}^{\top}\right\|_{F}=\underline{u}^{\top} \underline{u}$, with $u=\Sigmabf_{0}^{1 / 2} \underline{x}$;
equality (\ref{G1Seq1}) follows from spectral norm multiplicativity of the Kronecker product; inequalities (\ref{G1Feq3}) and (\ref{G1Seq2}) follows from the inequality relating the Frobenius norm and the spectral norm. Combining (\ref{G1}), (\ref{G1Feq3}) and (\ref{G1Seq2}) with the fact that $E_{P_0}\left[\underline{x}^{\top} \hat{\Sigmabf}_{H} \underline{x}\right]=\underline{x}^{\top} \Sigmabf_{0} \underline{x}$ (which follows from (\ref{ESigmaH})) and the Hanson-Wright inequality \cite[Theorem 1.1]{rudelson2013hanson}, for every $t_1 > 0$ we obtain 
\begin{eqnarray}
\notag
& & P_{0}\left(\left|\underline{x}^{\top} \hat{\Sigmabf}_{H} \underline{x}-\underline{x}^{\top} \Sigmabf_{0} \underline{x}\right|>t_{1}\right)\\
\notag
&=& P_{0}\left(\left|\underline{z}^{\top} \Gbf_{1} \underline{z}-E_{P_0}(\underline{z}^{\top} \Gbf_{1}\underline{z})\right|>t_{1}\right)\\
\notag
&\leqslant& 2 \exp{\left(-c \min \left(\frac{t_{1}^{2}}{4\|\Gbf_{1}\|_{F}^{2}}, \frac{t_{1}}{2\|\Gbf_{1}\|}\right)\right)}\\
\notag
&\leqslant& 2 \exp \left(-c \min \left(\frac{n q t_{1}^{2}}{4\left\|\Psibf_{0}\right\|^{2}\left(\underline{x}^{\top} \Sigmabf_{0} \underline{x}\right)^{2}}, \frac{n q t_{1}}{2\left\|\Psibf_{0}\right\|\left(\underline{x}^{\top} \Sigmabf_{0} \underline{x}\right)}\right) \right)\\
\label{P0teq1}
&=&2 \exp \left(-cnq \min \left(\left(\frac{t_{1}}{2\left\|\Psibf_{0}\right\|\left(\underline{x}^{\top} \Sigmabf_{0} \underline{x}\right)}\right)^{2}, \frac{t_{1}}{2\left\|\Psibf_{0}\right\|\left(\underline{x}^{\top} \Sigmabf_{0} \underline{x}\right)}\right)\right).
\end{eqnarray}

\noindent
Here $c$ is a universal fixed constant. It follows from Assumption H1 along with $\|\underline{x}\|_{2} = 1$ that $\tau_{1} <\underline{x}^{\top} \Sigmabf_{0} \underline{x}<\frac{1}{\tau_{1}}$ and $ \tau_{1}<\left\|\Psibf_{0}\right\| <\frac{1}{\tau_{1}}$. In particular, this implies that 
$$
\frac{\tau_{1}^{2}t_{1}}{2 }<\frac{t_{1}}{2\left\|\Psibf_{0}\right\|\left(\underline{x}^{\top} \Sigmabf_{0} \underline{x}\right)}<\frac{t_{1}}{2 \tau_{1}^{2}}<1
$$

\noindent
whenever $0< t_1 < 2 \tau_1^2$. Using (\ref{P0teq1}) along with the above observations, for any $0<t_1 < 2 \tau_1^2$, we obtain 
{\small
\begin{eqnarray} 
\label{P0teq1.2}
 P_{0}\left(\left|\underline{x}^{\top} \hat{\Sigmabf}_{H} \underline{x}-\underline{x}^{\top} \Sigmabf_{0} \underline{x}\right|> t_{1} \right)
\leqslant 2 \exp \left( \frac{-c n q} {4\left\|\Psibf_{0}\right\|^{2}\left(\underline{x}^{\top} \Sigmabf_{0} \underline{x}\right)^{2}} t_{1}^{2}\right)  \leqslant 2 \exp\left(\frac{-c \tau_{1}^{4} n q}{4} t_{1}^{2}\right) 
\end{eqnarray}}
For any constant $\tilde{C}_{0} > 0$, since $\tilde{C}_{0} \sqrt{\frac{\max(p,\log n)}{nq}} \rightarrow 0$ as $n \rightarrow \infty$
by Assumption H2, it follows that  $\tilde{C}_{0} \sqrt{\frac{\max(p,\log n)}{nq}} < 2 
\tau_1^2$ eventually. Hence, let $t_1=\tilde{C}_{0} \sqrt{\frac{\max(p,\log n)}{nq}}$ in (\ref{P0teq1.2}), we get

{\small
\begin{eqnarray} \label{P0teq2}
 P_{0}\left(\left|\underline{x}^{\top} \hat{\Sigmabf}_{H} \underline{x}-\underline{x}^{\top} \Sigmabf_{0} \underline{x}\right|> \tilde{C}_{0} \sqrt{\frac{\max(p,\log n)}{nq}} \right)
\leqslant 2 \exp \left( \frac{-c \tau_1^4 \tilde{C}_{0}^2 \max(p, \log n)}{4} \right) 
\end{eqnarray}}
\noindent
for large enough $n$.

Note that the bound in (\ref{P0teq2}) holds for every 
$\underline{x}$ such that $\|\underline{x}\|_{2} = 1$. Using (\ref{sup:norm}), (\ref{P0teq2}) along with the covering argument in \cite[Lemma 5.2]{Vershynin_2012} and \cite[Lemma B.2]{GKM:2019}, it follows that 
\begin{eqnarray}
\notag
& & P_{0} \left( ||\hat{\Sigmabf}_{H}-\Sigmabf_{0}|| > \tilde{C}_{0} \sqrt{\frac{\max(p,\log n)}{nq}} \right)\\
\notag
&=& P_{0}\left(\sup _{\|\underline{x}\|_{2} = 1}\left|\underline{x}^{\top} \hat{\Sigmabf}_{H} \underline{x}-\underline{x}^{\top} \Sigmabf_{0} \underline{x}\right| > \tilde{C}_{0} \sqrt{\frac{\max(p,\log n)}{nq}} \right)\\
\notag
&\leqslant& 21^p \sup _{\|\underline{x}\|_{2} = 1} P\left(\left|\underline{x}^{\top} \hat{\Sigmabf}_{H} \underline{x}-\underline{x}^{\top} \Sigmabf_{0} \underline{x}\right| > \tilde{C}_{0} \sqrt{\frac{\max(p,\log n)}{nq}} \right)\\
\label{sigmacovrate}
&\leqslant& 2\exp \left( -\frac{c \tau_1^4 \tilde{C}_{0}^2 \max(p, \log n)}{4} + \log 21\cdot p \right) \rightarrow 0 
\end{eqnarray}

\noindent
for any $\tilde{C}_{0}>\frac{2}{\tau_1^2}\sqrt{\frac{\log 21}{c}}$ as $n \rightarrow \infty$. Thus, the required spectral norm convergence rate for $\hat{\Sigmabf}_H$ has been established. 

Next, we proceed to the analysis of $\hat{\Psibf}_{H}$. Note that 
\begin{eqnarray*}
\hat{\Psibf}_{H}=\frac{1}{n \operatorname{tr}\left(\hat{\Sigmabf}_{H}\right)} \sum_{i=1}^{n} \Yibf^{\top} \Yibf=\frac{p}{\operatorname{tr}\left(\hat{\Sigmabf}_{H}\right)} \cdot \frac{1}{n p} \sum_{i=1}^{n} \Yibf^{\top} \Yibf=:T_{H} \cdot \tilde{\Psibf}_{H},
\end{eqnarray*}
where 
$$
T_{H}:=\frac{p}{\operatorname{tr}\left(\hat{\Sigmabf}_{H}\right)} \mbox{ and } \tilde{\Psibf}_{H}:=\frac{1}{n p} \sum_{i=1}^{n} \Yibf^{\top} \Yibf. 
$$

\noindent
Additionally, let $T_{0}:=\frac{p}{\operatorname{tr}\left(\Sigmabf_{0}\right)}$ and $\tilde{\Psibf}_{0}:=\frac{\operatorname{tr}\left(\Sigmabf_{0}\right)}{p} \Psibf_{0}.$ We first prove $\tilde{\Psibf}_{H}$ is consistent for $\tilde{\Psibf}_{0}$ and find its asymptotic high-dimensional spectral norm convergence rate following a similar approach as the one we used for $\hat{\Sigmabf}_{H}$ above. Since 
\begin{eqnarray}
\label{sup:norm2}
||\tilde{\Psibf}_{H}-\tilde{\Psibf}_{0}||=
\sup _{\|\underline{y}\|_{2} = 1}\left|\underline{y}^{\top} \tilde{\Psibf}_{H} \underline{y}-\underline{y}^{\top} \tilde{\Psibf}_{0} \underline{y}\right|,    
\end{eqnarray}
our first objective is to provide a high probability bound for the term\\
$\left|\underline{y}^{\top} \tilde{\Psibf}_{H} \underline{y}-\underline{y}^{\top} \tilde{\Psibf}_{0} \underline{y}\right| $ for an arbitrary ${\underline y} \in \mathbb{R}^q$ with $\|\underline{y}\|_{2} = 1$. By employing a methodology analogous to that used to express $\underline{x}^{\top} \hat{\Sigmabf}_{H} \underline{x}$ as $\underline{z}^{\top} \Gbf_{1} \underline{z}$ in (\ref{G1}), for an arbitrary ${\underline y} \in \mathbb{R}^q$ with $\|\underline{y}\|_{2} = 1$, we can obtain
\begin{eqnarray}
\label{G2}
\underline{y}^{\top} \tilde{\Psibf}_{H} \underline{y}
 =: \underline{z}^{\top} \Gbf_{2} \underline{z},
\end{eqnarray}
where
\begin{eqnarray*}
\underline{z}=\left(
\operatorname{vec}\left(\tilde{\Xbf}_{1}\right)^{\top} \;
\operatorname{vec}\left(\tilde{\Xbf}_{2}\right)^{\top} \;
\cdots  \; \operatorname{vec}\left(\tilde{\Xbf}_{n}\right)^{\top}\right)^{\top} \in 
\mathbb{R}^{npq},
\end{eqnarray*}
\begin{eqnarray*}
\tXibf=\Sigmabf_{0}^{-1 / 2} \Yibf \Psibf_{0}^{-1 / 2} \text{ for } 1 \leqslant i \leqslant n
\end{eqnarray*}
and
\begin{equation*}
\Gbf_{2}:=\frac{1}{n p} B D_{n}\left(\left(\Psibf_{0}^{1 / 2} \underline{y} \underline{y}^{\top} \Psibf_{0}^{1 / 2}\right) \otimes \Sigmabf_{0}\right).    
\end{equation*}
Note here entries of $\underline{z}$ are iid $N(0,1)$. Further, following a path very similar to the one which led to upper bounds for $\|\Gbf_{1}\|_{F}^{2}$ and $\|\Gbf_{1}\|$ in (\ref{G1Feq3}) and (\ref{G1Seq2}), we can get the following upper bounds for $\|\Gbf_{2}\|_{F}^{2}$ and $\|\Gbf_{2}\|$: 
\begin{equation} \label{G2Feq1}
\|\Gbf_{2}\|_{F}^{2} \leqslant \frac{1}{n p}\|\Sigmabf_0\|^{2}\left(\underline{y}^{\top} \Psibf_{0} \underline{y}\right)^{2} \mbox{ and } \|\Gbf_{2}\| \leqslant \frac{1}{n p}\left\|\Sigmabf_{0}\right\|\left(\underline{y}^{\top} \Psibf_{0} \underline{y}\right).
\end{equation}
Next, combining (\ref{G2}) and (\ref{G2Feq1}) with the fact that $E_{P_0}\left[\underline{y}^{\top} \tilde{\Psibf}_{H} \underline{y}\right]=\underline{y}^{\top} \tilde{\Psibf}_{0} \underline{y}$ (which follows from (\ref{EPsiH})) and the Hanson-Wright inequality \cite[Theorem 1.1]{rudelson2013hanson}, for every $t_2 > 0$ we obtain 

\begin{eqnarray}
\notag
&&P_{0}\left(\left|\underline{y}^{\top} \tilde{\Psibf}_{H} \underline{y}-\underline{y}^{\top} \tilde{\Psibf}_{0} \underline{y}\right|>t_{2}\right) \\
\label{P0t2eq2}
&\leqslant&2 \exp \left(-cnp \min \left(\left(\frac{t_{2}}{2\left\|\Sigmabf_{0}\right\|\left(\underline{y}^{\top} \Psibf_{0} \underline{y}\right)}\right)^{2}, \frac{t_{2}}{2\left\|\Sigmabf_{0}\right\|\left(\underline{y}^{\top} \Psibf_{0} \underline{y}\right)}\right)\right).
\end{eqnarray}
It follows from Assumption H1 along with $\|\underline{y}\|_{2} = 1$ that $\tau_{1} <\underline{y}^{\top} \Psibf_{0} \underline{y}<\frac{1}{\tau_{1}}$ and $ \tau_{1}<\left\|\Sigmabf_{0}\right\| <\frac{1}{\tau_{1}}$. In particular, this implies 
$$
\frac{\tau_{1}^{2}t_{2}}{2 }<\frac{t_{2}}{2\left\|\Sigmabf_{0}\right\|\left(\underline{y}^{\top} \Psibf_{0} \underline{y}\right)}<\frac{t_{2}}{2 \tau_{1}^{2}}<1,
$$

\noindent
whenever $0<t_2 < 2 \tau_1^2$. For any constant $\tilde{C}_{0} > 0$, note that $\tilde{C}_{0} \sqrt{\frac{\max(q,\log n)}{np}} \rightarrow 0$ as $n \rightarrow \infty$ by Assumption H2. Using (\ref{P0t2eq2}) along with the above observations, and arguments similar to those in (\ref{P0teq1.2}) and (\ref{P0teq2}), we obtain 
{\small
\begin{eqnarray}\label{P0teq3}
P_{0}\left(\left|\underline{y}^{\top} \tilde{\Psibf}_{H} \underline{y}-\underline{y}^{\top} \tilde{\Psibf}_{0} \underline{y}\right|> \tilde{C}_{0} \sqrt{\frac{\max(q,\log n)}{np}} \right)
\leqslant 2 \exp \left( \frac{-c \tau_1^4 \tilde{C}_{0}^2 \max(q, \log n)}{4} \right) 
\end{eqnarray}}
for every $n$ large enough such that $\tilde{C}_{0} \sqrt{\frac{\max(q,\log n)}{np}} < 2 
\tau_1^2$. Note that the bound in (\ref{P0teq3}) holds for every 
$\underline{y}$ such that $\|\underline{y}\|_{2} = 1$. Using (\ref{sup:norm2}), (\ref{P0teq3}) along with the covering argument in \cite[Lemma 5.2]{Vershynin_2012} and \cite[Lemma B.2]{GKM:2019}, following a similar line of arguments as in (\ref{sigmacovrate}), we obtain 
\begin{eqnarray}
\notag
& & P_{0} \left(\left\|\tilde{\Psibf}_{H}-\tilde{\Psibf}_{0}\right\| > \tilde{C}_{0} \sqrt{\frac{\max(q,\log n)}{np}} \right)\\
\label{tildepsicovrate}
&\leqslant& 2\exp \left( -\frac{c \tau_1^4 \tilde{C}_{0}^2 \max(q, \log n)}{4} + \log 21\cdot q \right) \rightarrow 0 
\end{eqnarray}

\noindent
for any $\tilde{C}_{0}>\frac{2}{\tau_1^2}\sqrt{\frac{\log 21}{c}}$ as $n \rightarrow \infty$. With the asymptotic high-dimensional spectral norm convergence rate of $\tilde{\Psibf}_{H}$ in hand, we note that for any $t_3>0$,
\begin{eqnarray}
\notag
&& P_{0}\left(\left\|\hat{\Psibf}_{H}-\Psibf_{0}\right\|>t_{3}\right)\\
\notag
&=&P_{0}\left(\left\|T_{H} \tilde{\Psibf}_{H}-T_{0} \tilde{\Psibf}_{0}\right\|>t_{3}\right) \\
\notag
& =&P_{0}\left(\left\|T_{H}\left(\tilde{\Psibf}_{H}-\tilde{\Psibf}_{0}\right)+\left(T_{H}-T_{0}\right) \tilde{\Psibf}_{0}\right\|>t_{3}\right) \\
\label{Psi0}
& \leqslant& P_{0}\left(T_{H}\left\|\tilde{\Psibf}_{H}-\tilde{\Psibf}_{0}\right\|>\frac{t_{3}}{2}\right)+P_{0}\left(\left\|\tilde{\Psibf}_{0}\right\|\left|T_{H}-T_{0}\right|>\frac{t_{3}}{2}\right).
\end{eqnarray}

\noindent
Since $\left\|\tilde{\Psibf}_{0}\right\| = \frac{\operatorname{tr}\left(\Sigmabf_{0}\right)}{p}\left\|\Psibf_{0}\right\| \leqslant \frac{1}{\tau_{1}{ }^{2}}$, the second term in (\ref{Psi0}) satisfies 
\begin{eqnarray}\label{Psi1}
P_{0}\left(\left\|\tilde{\Psibf}_{0}\right\|\left|T_{H}-T_{0}\right|>\frac{t_{3}}{2}\right) \leqslant P_{0}\left(\left|T_{H}-T_{0}\right|>\frac{t_{3} \tau_{1}^{2}}{2}\right).
\end{eqnarray}

\noindent
Using the consistency of $\hat{\Sigmabf}_{H}$ for $\Sigmabf_{0}$, combined with Assumption H1, we get 
\begin{eqnarray}
\label{Tbound1}
0<\frac{\tau_{1}}{2}<\frac{1}{2 T_{0}}<\frac{1}{T_{H}}<\frac{2}{T_{0}}<\frac{2}{\tau_{1}}<\infty ,  
\end{eqnarray}
 and thus 
 \begin{eqnarray}
 \label{Tbound2}
 \left|T_{H}-T_{0}\right|=\left|T_{H}\right|\left|T_{0}\right|\left|\frac{1}{T_{H}}-\frac{1}{T_{0}}\right| \leqslant \frac{2}{\tau_{1}} \cdot \frac{1}{\tau_{1}} \cdot\left|\frac{1}{T_{H}}-\frac{1}{T_{0}}\right|       
 \end{eqnarray}

 \noindent
 on an event $\tilde{C}_n$ such that $P_0 (\tilde{C}_n^c) \rightarrow 0$ as $n \rightarrow \infty$. Then by combining (\ref{Psi1}) and (\ref{Tbound2}) we can get the upper bound for the second term in (\ref{Psi0}) as follows: 
 {\small
\begin{eqnarray}
\notag
P_{0}\left(\left\|\tilde{\Psibf}_{0}\right\|\left|T_{H}-T_{0}\right|>\frac{t_{3}}{2}\right)&\leqslant& P_{0}\left(\left|\frac{1}{T_{H}}-\frac{1}{T_{0}}\right|>\frac{t_{3} \tau_{1}^{4}}{4}\right) + P_0 (\tilde{C}_n^c)\\
\notag
&=&P_{0}\left(\frac{1}{p}\left|\operatorname{tr}(\hat{\Sigmabf}_{H}-\Sigmabf_{0})\right|>\frac{t_{3} \tau_{1}^{4}}{4}\right) + P_0 (\tilde{C}_n^c)\\
\label{Psihat1eq1}
&\leqslant& P_{0}\left(\left\|\hat{\Sigmabf}_{H}-\Sigmabf_{0}\right\|>\frac{t_{3} \tau_1^{4}}{4}\right) + P_0 (\tilde{C}_n^c)\\
\notag
&\leqslant&  2\exp \left(-\frac{\tilde{C}_{0}^2 c \tau_{1}^{4}}{4} \max (p, \log n)+\log 21 \cdot p\right) + P_0 (\tilde{C}_n^c) \\
\label{Psihat1}
%\notag
%& & \rightarrow 0 \mbox{ as } n \rightarrow \infty 
\end{eqnarray}}

\noindent
for any $\tilde{C}_{0}>\frac{2}{\tau_1^2}\sqrt{\frac{\log 21}{c}}$ and $t_{3}\geqslant \frac{4\tilde{C}_{0}}{\tau_1^4} \sqrt{\frac{\max(p,\log n)}{nq}}$ , where the inequality (\ref{Psihat1eq1}) follows from the properties of eigenvalues, and the inequality (\ref{Psihat1}) follows from (\ref{sigmacovrate}). Also, the first term in (\ref{Psi0}) satisfies
{\small
\begin{eqnarray}
\notag
P_{0}\left(T_{H}\left\|\tilde{\Psibf}_{H}-\tilde{\Psibf}_{0}\right\|>\frac{t_{3}}{2}\right) &\leqslant&
P_{0}\left(\left\|\tilde{\Psibf}_{H}-\tilde{\Psibf}_{0}\right\|>\frac{t_{3} \tau_{1}}{4}\right) + P_0 (\tilde{C}_n^c)\\
\notag
&\leqslant&  2\exp \left(-\frac{\tilde{C}_{0}^2 c \tau_{1}^{4}}{4} \max (q, \log n)+\log 21 \cdot q\right) + P_0 (\tilde{C}_n^c)\\
\label{Psihat2}
\end{eqnarray}}
for any $\tilde{C}_{0}>\frac{2}{\tau_1^2}\sqrt{\frac{\log 21}{c}}$ and any $t_{3}\geqslant \frac{4\tilde{C}_{0}}{\tau_1} \sqrt{\frac{\max(q,\log n)}{np}}$  , where the inequality (\ref{Psihat2}) follows from (\ref{tildepsicovrate}). Combining equations (\ref{Psi0}), (\ref{Psihat1}) and (\ref{Psihat2}), for any $\tilde{C}_{0}>\frac{2}{\tau_1^2}\sqrt{\frac{\log 21}{c}}$, we get 
\begin{eqnarray}
 \notag
&&P_{0}\left(||\hat{\Psibf}_{H}-\Psibf_{0}||>\frac{4\tilde{C}_{0}}{\tau_1^4} \sqrt{\max\left(\frac{\max(p,\log n)}{nq},\frac{\max(q,\log n)}{np}\right)}\right)\\
\notag
&\leqslant& 2\exp \left(-\frac{\tilde{C}_{0}^2 c \tau_{1}^{4}}{4} \max (p, \log n)+\log 21 \cdot p\right)\\
\label{Psiteq}
& & +2\exp \left(-\frac{\tilde{C}_{0}^2 c \tau_{1}^{4}}{4} \max (q, \log n)+\log 21 \cdot q\right)+ 2 P_0 (\tilde{C}_n^c)
 \end{eqnarray}

\noindent
By the construction of $\tilde{C}_n$ and Assumption H2, it follows that all the three terms on the right side of the above inequality converge to zero as $n \rightarrow \infty$. Thus, the required spectral norm convergence rate for $\hat{\Psibf}_H$ has been established. 

Note that 
\begin{eqnarray*}
\|\hat{\Omegabf}_H-\Omegabf_0\| 
&=& \|\hat{\Sigmabf}_H^{-1}-\Sigmabf_0^{-1}\|\\
&=& \|\hat{\Sigmabf}_H^{-1} (\hat{\Sigmabf}_H-\Sigmabf_0) 
\Sigmabf_0^{-1}\|\\
&\leqslant& \|\hat{\Sigmabf}_H^{-1}\| \|(\hat{\Sigmabf}_H-\Sigmabf_0)\|
\|\Sigmabf_0^{-1}\|\\
&\leqslant& \frac{\|\hat{\Sigmabf}_H^{-1}\|}{\tau_1} \|(\hat{\Sigmabf}_H-\Sigmabf_0)\|. 
\end{eqnarray*}

\noindent
The last inequality follows by Assumption H1. By (\ref{sigmacovrate}) and Assumption H1, it follows that $\|\hat{\Sigmabf}_H^{-1}\| \leqslant 2/\tau_1$ on an event with $P_0$-probability converging to $1$ as $n \rightarrow \infty$. The required spectral norm convergence rate for $\hat{\Omegabf}_H$ now follows from (\ref{sigmacovrate}). Finally, the required convergence rate for $\hat{\Gammabf}_H$ follows by leveraging (\ref{Psiteq}) and then using similar arguments as above. 
 
%In conclusion, from (\ref{sigmacovrate}) and (\ref{Psiteq}) we can see $\hat{\Sigmabf}_{H}$ and $\hat{\Psibf}_{H}$ are consistent in spectral norm and obtain asymptotic high-dimensional spectral norm convergence rates for both these heuristic estimators.
\end{appendix}

%% if your bibliography is in bibtex format, uncomment commands:

\end{document}